\documentclass[11pt, a4paper,reqno]{amsart}

\usepackage[british]{babel}
\usepackage{hyperref}
\usepackage{tikz}
\usetikzlibrary{arrows.meta,calc,shapes.geometric}

\calclayout
\hypersetup{colorlinks=true,linkcolor=blue,citecolor=blue,urlcolor=blue}

\newtheorem{theorem}{Theorem}[section]
\newtheorem{proposition}[theorem]{Proposition}
\newtheorem{lemma}[theorem]{Lemma}
\theoremstyle{definition}
\newtheorem{remark}[theorem]{Remark}
\numberwithin{equation}{section}

\newcommand{\R}{\mathbb{R}}
\newcommand{\Hn}{\mathcal H}
\newcommand{\dd}{\,\mathrm{d}}
\newcommand{\A}{\mathring A}
\renewcommand{\L}{\operatorname{L}} 
\newcommand{\C}{\operatorname{C}} 

\title[Spherical rigidity for an exterior overdetermined problem]{Spherical rigidity for an exterior overdetermined problem with Neumann data prescribed by mean curvature}
\author{Lukas Niebel}
\address[Lukas Niebel]{Institut f\"ur Analysis und Numerik, Universit\"at M\"unster\\
Orl\'eans-Ring 10, 48149 M\"unster, Germany.}
\email{lukas.niebel@uni-muenster.de}
\date{8 April 2026}
\thanks{Lukas Niebel is funded by the Deutsche Forschungsgemeinschaft (DFG, German Research Foundation) under Germany's Excellence Strategy EXC 2044/2--390685587, Mathematics M\"unster: Dynamics--Geometry--Structure.}

\subjclass[2020]{Primary 35N25, 35R35, 53A10, 35B06; Secondary 31A25, 35J25, 31B20} 

\keywords{overdetermined elliptic boundary value problems, exterior domains, spherical rigidity, Neumann data prescribed by mean curvature, capacitary potentials, Serrin-type problems}

\begin{document}
\begin{abstract}
    We study an overdetermined elliptic free boundary problem for exterior domains in $\mathbb{R}^N$, $N \ge 2$, introduced by F.~Morabito [\emph{Comm. PDE} \textbf{46} (2021), 1137--1161]. The overdetermining condition prescribes the Neumann data as a multiple of the boundary mean curvature, with parameter $\Gamma$, together with a spherical compatibility condition. For $N \ge 3$, we prove rigidity of the spherical solution among star-shaped domains when $\Gamma \ge N-2$; in the borderline case $\Gamma = N-2$, the star-shapedness assumption can be removed, and rigidity holds among all bounded domains. The proof combines the Pohozaev identity, geometric identities, and the sharp boundary inequality of Agostiniani and Mazzieri for capacitary potentials. We also obtain rigidity among bounded domains for $\Gamma \le 0$ via Serrin's moving plane method. In dimension two, the unit disc is the only admissible domain for every $\Gamma$.
\end{abstract}

\maketitle

\section{Introduction}

Let $N\ge 3$, $u_0 \in \R $, $\gamma \in \R$, and let $R_0>0$. In this paper we study bounded domains $\Omega\subset\R^N$ with boundary $\Sigma:=\partial\Omega$ such that
\begin{equation}\label{eq:original-problem}
    \left\{
    \begin{aligned}
        -\Delta u      & = 0              &  & \text{in } \R^N\setminus\overline\Omega, \\
        u              & = u_0            &  & \text{on } \Sigma,                       \\
        \partial_\nu u & = \gamma \Hn + C &  & \text{on } \Sigma,                       \\
        u(x)           & \to 0            &  & \text{as } |x|\to\infty,
    \end{aligned}
    \right.
\end{equation}
with $|\Omega|=\omega_N R_0^N$. Here $\nu$ is the outer unit normal to $\Omega$, $\omega_N=|B_1|$, and $\Hn$ denotes the normalised mean curvature, with the convention
\[
    \Hn_{\partial B_R}=-\frac1R.
\]
Thus spheres have negative normalised mean curvature with respect to the outer normal.

We first treat the case $N \ge 3$ and discuss the planar case $N = 2$ in Section~\ref{sec:planar}. If $\Omega=B_{R_0}$, then the exterior harmonic function with boundary value $u_0$ is unique and radial, hence the overdetermined condition can be satisfied only for one specific value of $C$. This is the spherical compatibility value that we must fix before discussing rigidity or bifurcation around the ball. The problem \eqref{eq:original-problem} with $\Omega=B_{R_0}$ admits a solution if and only if
\begin{equation}\label{eq:C0}
    C=C_0(\gamma,R_0):=\frac{\gamma-(N-2)u_0}{R_0}.
\end{equation}
In that case the solution is
\[
    u_*(x)=u_0\Big(\frac{R_0}{|x|}\Big)^{N-2},
    \qquad |x|\ge R_0.
\]

In fact,
\[
    \partial_\nu u_*\big|_{\partial B_{R_0}}=-\frac{(N-2)u_0}{R_0},
    \qquad
    \Hn_{\partial B_{R_0}}=-\frac1{R_0}.
\]
Therefore the boundary condition $\partial_\nu u=\gamma \Hn + C$ becomes
\[
    -\frac{(N-2)u_0}{R_0}=-\frac{\gamma}{R_0}+C,
\]
which is \eqref{eq:C0}. Accordingly, from now on we fix the value of $C$ to be $C_0(\gamma,R_0)$.

\subsection{Rescaling to unit volume}

Let us first treat the case $u_0 = 0$. Let $\Omega$ be of class $\C^2$. By the maximum principle $u = 0$ in $\R^{N} \setminus \overline{\Omega}$, hence $\partial_\nu u = 0$ and thus $\Hn$ is constant whenever $\gamma \neq 0$. By Alexandrov's soap bubble theorem we deduce that $\Omega$ is, up to translation, a ball. If $\gamma = 0$, then any domain $\Omega$ yields a solution to \eqref{eq:original-problem}.

Let us assume from now on $u_0 \neq 0$. Set
\[
    x=R_0 y,
    \qquad
    u(x)=u_0 v(y),
    \qquad
    \widetilde\Omega=R_0^{-1}\Omega,
    \qquad
    \Gamma:=\frac\gamma{u_0}.
\]
Since $|\Omega|=\omega_N R_0^N$, we have $|\widetilde\Omega|=\omega_N$. The curvature scales by
\[
    \Hn_{\partial\Omega}(R_0 y)=\frac1{R_0}\,\Hn_{\partial\widetilde\Omega}(y),
\]
and the rescaled boundary condition becomes
\[
    \frac{u_0}{R_0}\partial_\nu v
    =
    \frac{\gamma}{R_0}\Hn_{\partial\widetilde\Omega}
    +
    \frac{\gamma-(N-2)u_0}{R_0}.
\]
Dividing by $u_0/R_0$ yields
\[
    \partial_\nu v=\Gamma \Hn_{\partial\widetilde\Omega}+\Gamma-(N-2).
\]
Dropping tildes and renaming $v$ as $u$, we are reduced to the normalised problem
\begin{equation}\label{eq:normalized-problem}
    \left\{
    \begin{aligned}
        -\Delta u      & =0                          &  & \text{in }\R^N\setminus\overline\Omega, \\
        u              & =1                          &  & \text{on }\Sigma,                       \\
        \partial_\nu u & = \Gamma \Hn + \Gamma-(N-2) &  & \text{on }\Sigma,                       \\
        u(x)           & \to 0                       &  & \text{as }|x|\to\infty,
    \end{aligned}
    \right.
\end{equation}
with $|\Omega|=\omega_N$.

Thus, after fixing the spherical compatibility value of $C$, the rigidity problem depends only on the single dimensionless parameter $\Gamma$, and the reference spherical solution is the unit ball.

\begin{remark} \label{rem:regularity}
    Any $\C^{1,1}$ solution $(u,\Omega)$ of \eqref{eq:normalized-problem} with $\Gamma\neq 0$
    is immediately smooth. Indeed, since $\Sigma=\partial\Omega$ is $\C^{1,1}$, classical
    boundary regularity for harmonic functions on $\C^{1,\alpha}$ hypersurfaces gives
    \[
        \partial_\nu u\in \C^\alpha(\Sigma)\qquad\text{for every }\alpha\in(0,1),
    \]
    see \cite{MR125307,MR162050}. Hence
    \[
        \Hn=\frac{\partial_\nu u-\Gamma+(N-2)}{\Gamma}\in \C^\alpha(\Sigma).
    \]
    Writing $\Sigma$ locally as a graph, the function $g$ solves a uniformly elliptic prescribed-mean-curvature equation with $\C^\alpha$ right-hand side
    and therefore $g\in \C^{2,\alpha}$ by the regularity theory for the prescribed
    mean-curvature equation \cite{lian2024pointwiseregularitylocallyuniformly}. Once $\Sigma\in \C^{2,\alpha}$, the classical
    boundary Schauder estimates for the harmonic Dirichlet problem \cite{MR125307,MR162050},
    combined with the corresponding Schauder bootstrap for the mean-curvature equation
    \cite{lian2024pointwiseregularitylocallyuniformly}, yield $\Sigma,u\in \C^\infty$.
\end{remark}

\subsection{Literature}

This problem was introduced in \cite{MR4267505} for $N = 3$, where a bifurcation analysis similar to that in \cite{MR2058166} was used to construct symmetry-breaking solutions. The same problem in dimensions $N \ge 2$ has been investigated in \cite{MR4917176}.

For $\Gamma = 0$ this problem is the exterior Serrin problem (\cite{MR333220,MR333221}) with constant Neumann data studied by Reichel in \cite{MR1463801}. Exceptional domains for a related problem are investigated in \cite{MR4603682,MR4985459}. We refer to \cite{MR3192039} for the classification of solutions in the planar case.
The reconstruction of domains from non-constant Neumann boundary data is studied, for example, in \cite{MR2746441}.

Similar overdetermined free boundary problems are studied in the context of electrostatics in \cite{MR2058166} ($N = 3$).
Moreover, the problem of \cite{MR2058166} is also related to fluid dynamics, where it describes stationary hollow vortex sheets with surface tension ($N = 2$). Symmetry-breaking solutions were constructed in \cite{MR1794849} for $N = 2$. The author proved in \cite{niebel2025globalrigiditytwodimensionalbubbles} that the circle is globally rigid in the regime where the surface tension, i.e.\ the curvature term, dominates ($N = 2$).
We want to mention also \cite{MR4972964} where steady bubbles in inviscid fluids are described by a related problem.
The difference to the problem of the present article is that here the jump condition is linear in the Neumann trace, whereas in \cite{MR2058166,MR1794849,niebel2025globalrigiditytwodimensionalbubbles,MR4972964} it depends quadratically on the trace.
Another overdetermined free boundary value problem involving the mean curvature and the Neumann trace is studied in \cite{MR2834910,MR2921691}.

\subsection{Bifurcation analysis}

Morabito proved in dimension $N=3$ that nontrivial exterior domains bifurcate from the complement of a ball for a sequence of distinguished values of the parameter on the spherical compatibility line \eqref{eq:C0}; see \cite{MR4267505}. In the higher-dimensional extension of Dai, Liu, and Morabito, the same bifurcation mechanism is carried out for $N\ge 4$ in a class of perturbations of the ball; see \cite{MR4917176}. Translating their parameter $\gamma$ into our normalised parameter $\Gamma=\gamma/u_0$, the bifurcation values become
\begin{equation}\label{eq:Gamma3}
    \Gamma_\ell^{(3)}=\frac{2}{\ell+2},
    \qquad \ell\ge 2,
\end{equation}
and, for $N\ge 4$,
\begin{equation}\label{eq:GammaN}
    \Gamma_\ell^{(N)}=\frac{(N-1)(N-2)}{\ell+N-1},\qquad \ell\ge 2.
\end{equation}
This can be verified by a simple calculation of the linearisation of $\partial_\nu u = \Gamma \Hn + \Gamma-(N-2)$ for graphs over the sphere.
The mode $\ell=1$ corresponds to translations and is excluded in the bifurcation analysis. Moreover, if we do not fix the volume we have another bifurcation point at $\Gamma_{0}^{(N)} = N-2$ which corresponds to the $\ell = 0$ mode and a radial branch.

Although \cite{MR4267505,MR4917176} are not formulated on the fixed-volume slice, the same distinguished values also arise in the linearisation of the volume-constrained problem. Indeed, near the unit ball the condition $|\Omega|=\omega_N$ defines a smooth codimension-one Banach manifold of perturbations, whose tangent space at the ball consists of functions with zero average on $\mathbb S^{N-1}$. Since the bifurcating directions are spherical harmonics of degree $\ell\ge 2$, they have zero average and are therefore tangent to the fixed-volume manifold. Thus the fixed-volume constraint is compatible with the bifurcation picture at the linearised level. However, this first-order compatibility does not by itself imply that the Crandall--Rabinowitz argument of \cite{MR4267505,MR4917176} can be carried out directly on the fixed-volume slice. To obtain genuine local fixed-volume branches one would need an additional constrained Lyapunov--Schmidt reduction, or an equivalent two-parameter argument. We do not investigate this further.

We emphasise that in the correct functionalanalytic setup this analysis implies local rigidity modulo translations of close-to-spherical solutions whenever $\Gamma \notin \{ \Gamma_\ell^{(N)} : \ell \ge 2 \}$.

\subsection{Global rigidity of the unit ball}

At the value $\Gamma=N-2$, rigidity is a direct corollary of Agostiniani--Mazzieri's sharp boundary inequality for capacitary potentials \cite[Theorem 4.1]{MR4037467}.

\begin{theorem}\label{thm:critical-rigidity}
    Let $N\ge 3$ and $\Gamma=N-2$. Let $\Omega\subset\R^N$ be a bounded domain with $|\Omega|=\omega_N$ and $\partial\Omega\in \C^{1,1}$. Assume that \eqref{eq:normalized-problem} admits a solution $u \in \C^2(\R^N\setminus\overline\Omega)\cap \C^1(\R^N\setminus\Omega)$.
    Then $\Omega$ is a unit ball. Consequently
    \[
        u(x)=|x-x_0|^{2-N}
        \qquad\text{for some }x_0\in\R^N,
    \]
    and $\Sigma=\partial B_1(x_0)$.
\end{theorem}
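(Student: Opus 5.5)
At $\Gamma=N-2$ the boundary condition in \eqref{eq:normalized-problem} reduces to
\[
    \partial_\nu u=(N-2)\Hn \qquad\text{on }\Sigma,
\]
because the constant term $\Gamma-(N-2)$ vanishes. Since $\nu$ points out of $\Omega$, i.e.\ into the exterior region, and $0<u<1$ there, we have $|\nabla u|=-\partial_\nu u$ on $\Sigma$. Writing $H:=-(N-1)\Hn$ for the unnormalised mean curvature with the convention that spheres have positive curvature, the condition reads
\[
    |\nabla u| = \frac{N-2}{N-1}\,H\qquad\text{on }\Sigma .
\]
I would compare this with the sharp inequality of Agostiniani--Mazzieri \cite[Theorem 4.1]{MR4037467}. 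For the capacitary potential $u$ of a bounded domain with smooth boundary, it states that
\[
    \Big(\frac{\operatorname{Cap}(\Omega)}{\text{normalisation}}\Big)^{\frac{1}{N-2}}\,\sup_\Sigma|\nabla u|\ \ge\ \text{(const)},
\]
together with the pointwise version
\[
    |\nabla u|\ \le\ \frac{N-2}{N-1}\,H\ \ \text{somewhere / in integral form},
\]
with equality if and only if $\Omega$ is a ball. The form I expect to use is the integral one:
\[
    \int_\Sigma |\nabla u|^{2}\,\dd\sigma\ \le\ \Big(\frac{N-2}{N-1}\Big)\int_\Sigma |\nabla u|\, H\,\dd\sigma ,
\]
or equivalently the monotonicity $U'(1)\le 0$ in their monotone quantity $U_p(t)=t^{-(p)(N-1)/(N-2)}\int_{\{u=t\}}|\nabla u|^{p+1}$ at $p=1$. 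That derivative at $t=1$ is exactly proportional to
\[
    \int_\Sigma |\nabla u|^{p}\Big(|\nabla u|-\tfrac{N-2}{N-1}H\Big)\dd\sigma ,
\]
and it vanishes only for rotationally symmetric $u$.

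Step 1: by Remark~\ref{rem:regularity}, since $\Gamma=N-2\neq0$, the pair $(u,\Omega)$ is smooth. This puts us within the regularity hypotheses of \cite{MR4037467}. By the strong maximum principle, $0<u<1$ in the exterior region and $u$ is the capacitary potential of $\Omega$. Hopf's lemma gives $\partial_\nu u<0$, so $\Hn<0$ and hence $H>0$ on $\Sigma$.

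Step 2: substitute the boundary identity into the quantity appearing in their Theorem 4.1. This gives equality in the sharp inequality. Their rigidity statement then says that $\Omega$ is a ball $B_R(x_0)$; note that no star-shapedness is needed, since their result holds for all bounded smooth domains with connected exterior. If $\R^N\setminus\overline\Omega$ is not connected, I would first argue that $u$ solves the problem on the unbounded component, and that bounded components of the exterior must be empty, using that $u\equiv1$ on any such component contradicts the boundary condition with $\Hn<0$. I would handle this more carefully if needed.

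Step 3: the volume normalisation $|\Omega|=\omega_N$ forces $R=1$. The exterior potential is then $u(x)=|x-x_0|^{2-N}$, and one checks directly that $\partial_\nu u=-(N-2)=(N-2)\Hn$.

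The main obstacle is matching the exact statement and normalisation of \cite[Theorem 4.1]{MR4037467}. I must confirm that the boundary quantity is the one with the constant $\frac{N-2}{N-1}$, and that its equality case yields a ball without extra assumptions such as connectedness of $\Sigma$. If their theorem is phrased as an inequality between $\sup|\nabla u|$, or $L^p$ norms, and capacity, I would instead derive equality in the integral form via the divergence identity of their monotonicity formula evaluated at $t=1$.
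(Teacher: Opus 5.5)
Your strategy is the paper's. At $\Gamma=N-2$ the boundary condition is $\partial_\nu u=-\frac{N-2}{N-1}H$ pointwise, so equality holds in the Agostiniani--Mazzieri inequality, its rigidity clause applies, and the volume constraint then fixes $R=1$. The normalisation issue you flag as the main obstacle disappears with the form recorded in Proposition~\ref{prop:AM-boundary}, namely $\|\partial_\nu u\|_{\L^p(\Sigma)}\le\frac{N-2}{N-1}\|H\|_{\L^p(\Sigma)}$ for $p\ge 2-\frac1{N-1}$. At $p=2$, equality follows at once from $|\partial_\nu u|=\frac{N-2}{N-1}|H|$ and needs no sign information on $H$. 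Your $\beta=1$ integral version also works, but only once the exterior is known to be connected.

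Two steps are not right as written.

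First, your exclusion of a bounded component $U$ of $\R^N\setminus\overline\Omega$ is circular. The sign $\Hn<0$ in your Step~1 comes from Hopf's lemma on the boundary of the unbounded component, where $0<u<1$. In $U$, however, $u\equiv1$, so $\partial_\nu u=0$ on $\partial U$ and Hopf gives nothing there. What the boundary condition actually yields is $\Hn\equiv0$ on $\partial U$. The contradiction must therefore come from the nonexistence of closed minimal hypersurfaces in $\R^N$; for instance, the Minkowski formula \eqref{eq:minkowski1} applied to $U$ would give $(N-1)|\partial U|=0$. This is how the paper argues. Only after this step are $0<u<1$, the Hopf sign, and the identification of $u$ as the capacitary potential in your Step~1 legitimate.

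Second, the equality case yields only $u(x)=f(|x-x_0|)$, not $\Omega=B_R(x_0)$ directly. You still need the short argument of Lemma~\ref{lem:radial-implies-ball}: a radial $u$ with $u=1$ on $\partial\Omega$ and $0<u<1$ on a connected exterior forces $\partial\Omega$ to be a sphere. That argument again relies on the connectedness from the first point.
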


Assuming additional star-shapedness, we obtain rigidity for all $\Gamma >N-2$.

\begin{theorem}\label{thm:star-outer-rigidity}
    Let $N\ge 3$ and $\Gamma>N-2$. Let $\Omega\subset\R^N$ be a bounded domain with $|\Omega|=\omega_N$ and $\partial\Omega\in \C^{1,1}$.
    Assume that $\Omega$ is star-shaped with respect to some point $x_0$.
    Assume that \eqref{eq:normalized-problem} admits a solution $u \in \C^2(\R^N\setminus\overline\Omega)\cap \C^1(\R^N\setminus\Omega)$. Then $\Omega$ is a unit ball.
\end{theorem}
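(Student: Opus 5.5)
The plan is to turn the overdetermined condition into an identity that conflicts with the Agostiniani--Mazzieri inequality \cite[Theorem 4.1]{MR4037467} unless $\Sigma$ is a round sphere. The Pohozaev identity and the Minkowski formulae supply the extra information, and star-shapedness enters only as the sign condition $(x-x_0)\cdot\nu\ge 0$.

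\textbf{Setup.} By Remark~\ref{rem:regularity} everything is smooth, so boundary integrations by parts are justified. Translate so that $x_0=0$ and write $h:=-\Hn$ (so $h\equiv 1$ on $\partial B_1$). By the maximum principle $0<u<1$ outside $\overline\Omega$, so $\partial_\nu u=-|\nabla u|$ on $\Sigma$. The boundary condition in \eqref{eq:normalized-problem} then reads
\[
|\nabla u| = \Gamma(h-1)+(N-2)\qquad\text{on }\Sigma .
\]
Since $|\nabla u|\ge 0$ and $\Gamma>0$, this forces $h\ge 1-(N-2)/\Gamma>0$, so $\Sigma$ is mean convex. This is exactly what is needed to apply the Agostiniani--Mazzieri inequality.

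\textbf{The inequality $\int_\Sigma h\le|\Sigma|$.} In our normalisation, the $p=1$ case of their inequality (as I recall it, checked against the sphere) is
\[
\int_\Sigma|\nabla u| \le (N-2)\int_\Sigma h ,
\]
with equality only on spheres. Inserting the boundary condition gives
\[
\big(\Gamma-(N-2)\big)\Big(\int_\Sigma h-|\Sigma|\Big)\le 0 .
\]
Because $\Gamma>N-2$, this yields $\int_\Sigma h\le|\Sigma|$, with equality only for spheres. A sphere must then have radius $1$ by the volume constraint.

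\textbf{The reverse inequality $\int_\Sigma h\ge|\Sigma|$.} This is the main obstacle, and star-shapedness has to be used here. I would combine three identities.
\begin{itemize}
\item[(i)] The Pohozaev identity for the exterior harmonic function with $u=1$ on $\Sigma$ and $u=O(|x|^{2-N})$. Since $\nabla u=-|\nabla u|\nu$ on $\Sigma$, and the boundary term at infinity vanishes because $|\nabla u|^2|x|^{N}=O(|x|^{2-N})$, I expect it to give
\[
\int_\Sigma|\nabla u|^2\,x\cdot\nu=(N-2)\int_\Sigma|\nabla u| .
\]
\item[(ii)] The Minkowski formulae
\[
\int_\Sigma x\cdot\nu=N|\Omega|=|\mathbb S^{N-1}|, \qquad \int_\Sigma h\,x\cdot\nu=|\Sigma| .
\]
\item[(iii)] The Agostiniani--Mazzieri inequality, used once more with a second exponent (for instance $p=2$), namely
\[
\int_\Sigma|\nabla u|^2\le (N-2)\int_\Sigma |\nabla u|\,h .
\]
\end{itemize}
Substituting $|\nabla u|=\Gamma(h-1)+N-2$ into (i) and expanding, the Minkowski formulae rewrite all terms linear in $h$. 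What remains is a quadratic expression of the form
\[
\Gamma^2\int_\Sigma (h-1)^2\,x\cdot\nu \;+\;(\text{terms controlled by }\textstyle\int_\Sigma h-|\Sigma|).
\]
Here $x\cdot\nu\ge0$ makes the first term nonnegative. Comparing with the $p=1$ step, the aim is to conclude $\int_\Sigma(h-1)^2\,x\cdot\nu\le 0$.

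\textbf{Conclusion.} If that works, $h\equiv 1$ wherever $x\cdot\nu>0$. Then $|\nabla u|$ is constant on that set, and one of the following closes the argument:
\begin{itemize}
\item the equality case of Agostiniani--Mazzieri;
\item $h\equiv1$ on all of $\Sigma$ by continuity and analyticity, followed by Alexandrov's theorem;
\item Serrin's exterior theorem \cite{MR1463801}.
\end{itemize}
Any of these shows that $\Omega$ is a ball, and the volume normalisation $|\Omega|=\omega_N$ makes it the unit ball.

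The delicate point is the bookkeeping of signs and constants when the two inequalities are merged. If (iii) does not close the argument directly, I would fall back on the monotone quantities $\int_{\{u=t\}}|\nabla u|^p$ from \cite{MR4037467} to obtain a version of the $p$-inequality weighted by $x\cdot\nu$.
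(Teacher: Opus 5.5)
\textbf{Gap in the decisive step.} Your architecture (Agostiniani--Mazzieri at the boundary, Pohozaev, and the Minkowski formulae weighted by $x\cdot\nu\ge0$) is the paper's. However, you leave the decisive step unexecuted, and as you describe it, it would not close. Substitute $|\nabla u|=\Gamma(h-1)+(N-2)$ into (i) and use (ii) with $N|\Omega|=|\partial B_1|$. You get exactly
\[
\Gamma^2\int_\Sigma (x\cdot\nu)(h-1)^2+(N-2)\bigl(2\Gamma-(N-2)\bigr)\bigl(|\Sigma|-|\partial B_1|\bigr)=(N-2)\Gamma\Bigl(\int_\Sigma h-|\Sigma|\Bigr).
\]
So the remainder is not ``controlled by $\int_\Sigma h-|\Sigma|$''. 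There is a second term, with positive coefficient, in $|\Sigma|-|\partial B_1|$, and nothing on your list gives it a sign. Combined with $\int_\Sigma h\le|\Sigma|$, the identity only yields $|\Sigma|\le|\partial B_1|$ and $\Gamma^2\int_\Sigma(x\cdot\nu)(h-1)^2\le(N-2)(2\Gamma-(N-2))(|\partial B_1|-|\Sigma|)$. That does not force $\int_\Sigma(x\cdot\nu)(h-1)^2=0$.

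The missing ingredient is the isoperimetric inequality $|\Sigma|\ge|\partial B_1|$, which follows from $|\Omega|=\omega_N$; the paper uses it for exactly this purpose. With it, the left side is nonnegative and the right side nonpositive. Hence $|\Sigma|=|\partial B_1|$, and the isoperimetric equality case gives the unit ball directly, with no need for analyticity, Alexandrov or Serrin.

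Completed this way, your route is slightly leaner than the paper's. There, the second Minkowski formula and the weighted Cauchy--Schwarz combine to $\int_\Sigma(x\cdot\nu)h^2\ge|\Sigma|^2/|\partial B_1|$. Your $\int_\Sigma(x\cdot\nu)(h-1)^2\ge0$ is the weaker bound $\int_\Sigma(x\cdot\nu)h^2\ge 2|\Sigma|-|\partial B_1|$, and it already suffices.

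\textbf{The $p=1$ step.} You cannot obtain $\int_\Sigma h\le|\Sigma|$ from ``the $p=1$ case''. Proposition~\ref{prop:AM-boundary} (Theorem~4.1 of Agostiniani--Mazzieri) requires $p\ge 2-\frac1{N-1}\ge\frac32$, so $p=1$ is not available. Use $p=2$ in the norm form stated there (rather than your (iii)). With $\lambda=\Gamma-(N-2)$ you have $|\nabla u|=\Gamma h-\lambda$ on $\Sigma$. The triangle inequality then gives $\Gamma\|h\|_{\L^2(\Sigma)}-\lambda|\Sigma|^{1/2}\le(N-2)\|h\|_{\L^2(\Sigma)}$, so $\|h\|_{\L^2(\Sigma)}\le|\Sigma|^{1/2}$, and H\"older yields $\int_\Sigma h\le|\Sigma|$. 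This is the paper's argument.

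\textbf{Connectedness.} Your claims $0<u<1$ and that $u$ is the capacitary potential (needed to apply the Proposition) both rely on $\R^N\setminus\overline\Omega$ being connected. This follows from star-shapedness and should be stated.
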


Next, we record the classical case $\Gamma=0$, due to Reichel
\cite[Theorem~1]{MR1463801}.

\begin{theorem}[Reichel]\label{thm:gamma-zero}
    Let $N\ge 3$ and $\Gamma=0$. Let $\Omega\subset\R^N$ be a bounded domain
    with $|\Omega|=\omega_N$ and $\partial\Omega\in \C^2$.
    Assume that \eqref{eq:normalized-problem} admits a solution
    $u \in \C^2(\R^N\setminus\overline\Omega)\cap \C^1(\R^N\setminus\Omega)$.
    Then $\Omega$ is a unit ball.
\end{theorem}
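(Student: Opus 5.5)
With $\Gamma=0$ the normalised problem \eqref{eq:normalized-problem} reads
\[
-\Delta u=0 \ \text{in } \R^N\setminus\overline\Omega,\qquad u=1 \ \text{and}\ \partial_\nu u=-(N-2) \ \text{on } \Sigma,\qquad u\to0 \ \text{at infinity}.
\]
This is the exterior Serrin problem with constant Neumann data. The plan is to run Reichel's moving plane argument and then use the volume normalisation.

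\emph{Step 1: boundary behaviour and connectedness.} By the maximum principle, $0<u<1$ in the exterior domain. Since $\R^N\setminus\overline\Omega$ may have several components, I would check that any bounded component forces $u\equiv1$ there. That contradicts $\partial_\nu u=-(N-2)\neq0$, so we may assume $\Omega$ has connected complement. Hopf's lemma then gives $\partial_\nu u<0$, which is consistent with the data. At infinity, $u$ has the expansion $u(x)=c|x|^{2-N}+O(|x|^{1-N})$ with $c>0$, together with the corresponding expansion of $\nabla u$.

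\emph{Step 2: moving planes from infinity.} Fix a direction $e$ and let $T_\lambda=\{x\cdot e=\lambda\}$. Start with $\lambda$ larger than $\max_{\overline\Omega}x\cdot e$ and reflect the region $\{x\cdot e>\lambda\}$ across $T_\lambda$. On this cap we compare $u$ with its reflection $u_\lambda$ and aim to show $w_\lambda:=u_\lambda-u\ge0$. Note that the reflected cap may contain part of $\Omega$, where $u_\lambda$ is undefined; the comparison is made only in the exterior of both $\Omega$ and its reflection. For large $\lambda$, the asymptotic expansion gives $w_\lambda\ge0$, which is the usual decay argument at infinity for harmonic functions. Decrease $\lambda$ to the critical value $\lambda^*$. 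At that value either the reflected cap of $\Omega$ becomes internally tangent to $\Sigma$, or $T_{\lambda^*}$ becomes orthogonal to $\Sigma$. In both cases, the equal Dirichlet data and equal Neumann data (Serrin's corner lemma in the orthogonal case) force $w_{\lambda^*}\equiv0$ by the strong maximum principle. Hence $\Omega$ is symmetric about $T_{\lambda^*}$. Since $e$ was arbitrary, $\Omega$ is radially symmetric about some point $x_0$. Being connected with connected complement, it is a ball $B_R(x_0)$.

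\emph{Step 3: identifying the radius.} The volume condition $|\Omega|=\omega_N$ gives $R=1$. As a consistency check, $u=|x-x_0|^{2-N}$ has $\partial_\nu u=-(N-2)$, matching the data.

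\emph{Main obstacle.} I expect the hardest part to be the start of the moving plane procedure at infinity and the exclusion of a negative minimum of $w_\lambda$ escaping to infinity during the continuous decrease of $\lambda$. This is where one needs Gidas--Ni--Nirenberg-type asymptotic lemmas for exterior harmonic functions, using that $u$ decays like $c|x|^{2-N}$ with $c>0$ and that $\partial_{x\cdot e}u<0$ for large $x\cdot e$. All of this is exactly what Reichel carries out in \cite[Theorem~1]{MR1463801}, so in the paper I would simply invoke that result after the normalisation.
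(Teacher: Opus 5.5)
Your proposal is correct and matches the paper's proof. You rule out bounded components of $\R^N\setminus\overline\Omega$ because the maximum principle would force $u\equiv 1$ there, contradicting $\partial_\nu u=-(N-2)\neq 0$; this gives a connected exterior with $0<u<1$, after which you invoke Reichel's Theorem~1 \cite{MR1463801} and use $|\Omega|=\omega_N$ to fix the radius, exactly as the paper does. Your moving-plane sketch in Step~2 only recaps Reichel's argument and is not needed once his theorem is cited.
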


In the regime $\Gamma < 0$ the sign of the curvature term is favourable for Serrin's method of moving planes and thus we can extend Reichel's proof to solutions of \eqref{eq:normalized-problem}.

\begin{theorem}\label{thm:negative-rigidity}
    Let $N\ge 3$ and $\Gamma<0$. Let $\Omega\subset\R^N$ be a bounded domain with $|\Omega|=\omega_N$ and $\partial\Omega\in \C^{1,1}$. Assume that \eqref{eq:normalized-problem} admits a solution $u \in \C^2(\R^N\setminus\overline\Omega)\cap \C^1(\R^N\setminus\Omega)$.
    Then $\Omega$ is a unit ball.
\end{theorem}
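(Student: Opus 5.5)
We will adapt Reichel's moving plane argument for the exterior Serrin problem to the boundary condition $\partial_\nu u=\Gamma\Hn+\Gamma-(N-2)$ with $\Gamma<0$. By Remark~\ref{rem:regularity}, $\Sigma$ and $u$ are smooth up to $\Sigma$. By the maximum principle $0<u<1$ in $\R^N\setminus\overline\Omega$. Hopf's lemma then gives $\partial_\nu u<0$ on $\Sigma$. Note that $\nu$ points into the exterior region, and $u$ decreases from $1$.

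\textbf{Moving planes.} Fix a direction $e$ and let $T_\lambda=\{x\cdot e=\lambda\}$. For $\lambda$ large, let $\Sigma_\lambda=\{x\cdot e>\lambda\}\setminus\overline\Omega$, and write $x^\lambda$ for the reflection of $x$ in $T_\lambda$. Let $\Omega'_\lambda$ be the reflection of the cap $\Omega\cap\{x\cdot e>\lambda\}$. We compare $u$ with $u_\lambda(x)=u(x^\lambda)$ on the region $\{x\cdot e<\lambda\}\setminus(\overline\Omega\cup\overline{\Omega'_\lambda})$.

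Following Reichel, we start with $\lambda$ large and decrease it. While the reflected cap stays inside $\Omega$, we set $w_\lambda=u_\lambda-u$. This function is harmonic, $w_\lambda\to0$ at infinity, $w_\lambda=0$ on $T_\lambda$, and $w_\lambda\le 0$ on the boundary piece $\partial\Omega'_\lambda\setminus T_\lambda$, because $u\le1$ there and $u_\lambda=1$. So $w_\lambda\le0$ in the comparison region, and this also holds in the half-space near infinity using the asymptotics $u\sim c|x|^{2-N}$. Together with the exterior monotonicity $\partial_e u<0$ in $\{x\cdot e>\lambda\}\setminus\overline\Omega$, this propagates as $\lambda$ decreases.

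The process stops at a critical $\lambda^*$, where one of two things happens. Either (i) $\partial\Omega'_{\lambda^*}$ is internally tangent to $\Sigma$ at a point $p\notin T_{\lambda^*}$, or (ii) $T_{\lambda^*}$ is orthogonal to $\Sigma$ at some point $q$.

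\textbf{Using the sign of $\Gamma$.} In case (i), $p$ is the reflection of a point $p^{\lambda^*}\in\Sigma$, and $\Omega'_{\lambda^*}\subset\Omega$ touches $\Sigma$ from inside at $p$. Comparing the two surfaces gives $\Hn_{\partial\Omega'}(p)\le\Hn_\Sigma(p)$ in the right orientation. A ball inside has more negative normalised curvature in our convention, so $\Hn_\Sigma(p^{\lambda^*})\le\Hn_\Sigma(p)$. Since $\Gamma<0$, the boundary condition gives
\[
\partial_\nu u_{\lambda^*}(p)=\Gamma\Hn_\Sigma(p^{\lambda^*})+\Gamma-(N-2)\ \ge\ \partial_\nu u(p),
\]
that is, $\partial_\nu w_{\lambda^*}(p)\ge0$.

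On the other hand, $w_{\lambda^*}\le0$ in the comparison region and $w_{\lambda^*}(p)=0$. If $w_{\lambda^*}\not\equiv0$, Hopf's lemma gives $\partial_\nu w_{\lambda^*}(p)<0$. Here $\nu$ points into the region where $w\le0$, outward from $\Omega$. This is a contradiction. In case (ii) we use Serrin's corner lemma at $q$. The second-order tangential expansion of $w_{\lambda^*}$ at the corner must vanish. This follows by differentiating the boundary condition: the relation $\partial_\nu u=\Gamma\Hn+\text{const}$ and its reflected counterpart agree to second order at $q$. The curvature ordering with $\Gamma<0$ again fixes the sign needed, so the corner lemma gives a contradiction unless $w_{\lambda^*}\equiv0$.

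Hence $w_{\lambda^*}\equiv0$, so $u$ is symmetric about $T_{\lambda^*}$. The surface $\Sigma$, being the level set $\{u=1\}$ bounding the exterior region, is then symmetric too, and by the monotonicity it is convex in direction $e$. Since $e$ was arbitrary, $\Omega$ is a ball, and $|\Omega|=\omega_N$ makes it a unit ball.

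\textbf{Main obstacle.} I expect the corner case (ii) to be the main difficulty. With non-constant Neumann data, Serrin's argument needs the second derivatives of $w$ at $q$ to vanish. Verifying this requires expressing $\Hn$ in local graph coordinates and checking that the $\Gamma$-weighted curvature terms cancel by the reflection symmetry at second order. I would try to reduce this to Reichel's computation plus the observation that both boundaries share the tangent plane and second fundamental form along $T_{\lambda^*}\cap\Sigma$ at $q$.

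A second delicate point is the start and continuation of the procedure near infinity in the exterior setting. Here I would follow Reichel's Theorem~1 directly, since the condition at infinity is unchanged.
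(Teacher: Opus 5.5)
Your case (i) is correct and is the same argument as the paper's. However, the proposal has a genuine gap in the corner case (ii), and a smaller one in the set-up.

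\textbf{The corner case (ii).} Here the mechanism you propose would fail.

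Normalise so that $q=0$, $T_{\lambda^*}=\{x_1=0\}$, $\nu(q)=e_N$, and $\Omega=\{x_N<\varphi(x_1,\xi)\}$ near $q$. Take $w=u-u(x^{\lambda^*})\ge 0$, the negative of your $w_{\lambda^*}$. Reflection symmetry and $u|_\Sigma\equiv 1$ give $\nabla w(0)=0$ and $\partial_{x_1}^2w(0)=\partial_{x_N}^2w(0)=0$. So along the inward bisector $s=(-e_1+e_N)/\sqrt2$, everything hinges on $\partial_{x_1}\partial_{x_N}w(0)=2\partial_{x_1}\partial_{x_N}u(0)$. Differentiating the Neumann condition along $\Sigma$ gives
\[
\partial_{x_1}\partial_{x_N}u(0)=\Gamma\,\partial_{x_1}\Hn(0)=\frac{\Gamma}{N-1}\Bigl(\varphi_{111}(0)+\sum_{j=2}^{N-1}\varphi_{1jj}(0)\Bigr).
\]
This is third order in $\varphi$, and your plan does not control it:
\begin{itemize}
\item That $\Sigma$ and its reflection share tangent plane and second fundamental form at $q$ says nothing about this quantity.
\item Reflection reverses the sign of $\partial_{x_1}\Hn$ rather than cancelling it.
\item A ``curvature ordering'' at $q$ is empty, because the curvatures coincide there.
\item In Reichel's constant-data setting this term is absent, so you cannot reduce to his computation.
\item $w\ge 0$ in the corner only yields $\partial_{x_1}\partial_{x_N}u(0)\le 0$, which is compatible with the corner lemma. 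The reverse inequality must come from the boundary condition.
\end{itemize}

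The missing idea is a one-sided third-order bound extracted from $\Omega'_{\lambda^*}\subset\Omega$. This containment gives $\varphi(x_1,\xi)\ge\varphi(-x_1,\xi)$ for $x_1<0$.
\begin{itemize}
\item Hence $\partial_{x_1}\varphi(0,\xi)\le 0$, with equality at $\xi=0$. So $\xi\mapsto\partial_{x_1}\varphi(0,\xi)$ has a local maximum at $0$, its Hessian is negative semidefinite, and $\sum_j\varphi_{1jj}(0)\le 0$.
\item Also $\varphi(t,0)-\varphi(-t,0)=\tfrac13\varphi_{111}(0)t^3+o(t^3)\ge 0$ for $t<0$, which forces $\varphi_{111}(0)\le 0$.
\end{itemize}
Thus $\partial_{x_1}\Hn(0)\le 0$. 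Only now does $\Gamma<0$ enter: it gives $\partial_{x_1}\partial_{x_N}u(0)\ge 0$. Hence $\partial_s w(0)=0$ and $\partial_{ss}w(0)=-\partial_{x_1}\partial_{x_N}w(0)\le 0$, which contradicts Serrin's corner lemma.

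\textbf{Connectedness of the exterior.} You assert $0<u<1$ in $\R^N\setminus\overline\Omega$ and $\partial_\nu u<0$ on all of $\Sigma$ ``by the maximum principle''. But the complement of a bounded domain need not be connected. On a bounded component $U$ one has $u\equiv 1$ and $\partial_\nu u=0$ on $\partial U$. So both claims fail there, and so does your closing step identifying $\Sigma$ with the level set bounding the exterior region.

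Such $U$ must be excluded first, again using the sign of $\Gamma$. The condition $\partial_\nu u=0$ forces $\Hn\equiv(N-2-\Gamma)/\Gamma<0$ on $\partial U$ with respect to $\nu$. Then $\partial U$ has constant mean curvature of the wrong sign with respect to the outer normal of $U$. The first Minkowski formula, or a farthest-point argument, rules this out.

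\textbf{A small slip.} Your justification of $w_\lambda\le 0$ on the boundary has the roles reversed. On $\Sigma\cap\{x\cdot e<\lambda\}$ one has $u=1$ and $u_\lambda\le 1$, because $x^\lambda\notin\Omega$.
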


\begin{figure}[ht]
    \centering
    \begin{tikzpicture}[
            x=1.008cm,y=1cm,>=Latex,
            every node/.style={font=\scriptsize},
            global/.style={black,line width=1.9pt},
            local/.style ={black,line width=1.7pt,dash pattern=on 4.5pt off 2pt},
            outer/.style ={black,line width=1.6pt,dash pattern=on 0pt off 2.1pt,line cap=round}
        ]
        \def\rad{2.25pt}

        \coordinate (A)  at (-1.00,0);
        \coordinate (O)  at (0,0);

        \coordinate (E1) at (0.22,0);
        \coordinate (E2) at (0.40,0);
        \coordinate (E3) at (0.62,0);
        \coordinate (E4) at (0.92,0);
        \coordinate (E5) at (1.55,0);
        \coordinate (E6) at (2.45,0);
        \coordinate (E7) at (3.75,0);

        \coordinate (C)  at (5.75,0);
        \coordinate (B)  at (7.10,0);

        \draw[global] (A) -- (O);
        \draw[local]  (O) -- (C);
        \draw[outer,-{Latex[length=2.2mm]}] (C) -- (B);

        \foreach \P in {E1,E2,E3,E4,E5,E6,E7}
            {
                \fill[white] (\P) circle (\rad);
                \draw[black,line width=0.75pt] (\P) circle (\rad);
            }

        \fill[black] (O) circle (\rad);
        \fill[black] (C) circle (\rad);
        \draw[black,line width=0.75pt] (O) circle (\rad);
        \draw[black,line width=0.75pt] (C) circle (\rad);

        \node[below=5pt] at (O) {$0$};
        \node[below=5pt] at (E5) {$\Gamma_4$};
        \node[below=5pt] at (E6) {$\Gamma_3$};
        \node[below=5pt] at (E7) {$\Gamma_2$};
        \node[below=5pt] at (C) {$N-2$};

    \end{tikzpicture}
    \caption{Schematic rigidity/bifurcation picture (not to scale).
        The solid black part and the filled black points indicate global rigidity, the open circles indicate bifurcation values,
        the dashed part denotes the regime of close-to-spherical local rigidity, and the dotted ray
        denotes global rigidity in the class of star-shaped domains.}
\end{figure}
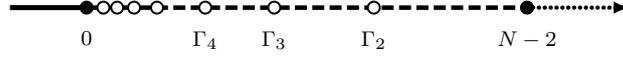

\begin{remark}
    We emphasise that the bifurcation branches of \cite{MR4267505,MR4917176} are small $\C^2$ perturbations of the sphere and thus star-shaped. Comparing the bifurcation regime with the rigidity threshold we see that
    \[
        0<\Gamma_\ell^{(N)}=\frac{(N-1)(N-2)}{\ell+N-1}<N-2
        \qquad (\ell\ge2),
    \]
    with gap of size $(N-2)-\Gamma_2^{(N)}=2-\frac{6}{N+1}$. We emphasise that $\Gamma_{0}^{(N)} = N-2$ and in this sense our result is sharp.
\end{remark}

\begin{remark}
    Every bounded convex domain is star-shaped with respect to every interior point. Hence Theorem~\ref{thm:star-outer-rigidity} yields rigidity for convex sets in the regime $\Gamma>N-2$.
\end{remark}

\begin{remark}
    The results above are naturally organised around two complementary mechanisms. Reichel's adaptation of Serrin's moving-plane method treats the case $\Gamma=0$ and extends to the regime $\Gamma<0$. By contrast, Agostiniani--Mazzieri's sharp boundary inequality for capacitary potentials yields rigidity at $\Gamma=N-2$ and continues to underpin the argument for $\Gamma>N-2$.
\end{remark}

\begin{remark}
    Following \cite{MR3383177}, we mention an interesting geometric reformulation of
    \eqref{eq:normalized-problem} for $\Gamma \neq 0$. Define the conformal metric
    \[
        g_\Gamma:=u^{2/\Gamma}\,\delta
        \qquad\text{in }\R^N\setminus\overline\Omega,
    \]
    where $\delta$ is the standard Euclidean metric on $\R^N$. Since $u=1$ on $\Sigma$, the conformal change formula for the mean curvature gives
    \[
        H_{g_\Gamma}
        =
        H+\frac{N-1}{\Gamma}\partial_\nu u
        =
        \frac{N-1}{\Gamma}\bigl(\Gamma-(N-2)\bigr),
    \]
    so $\Sigma$ has constant mean curvature in $(\R^N\setminus\overline\Omega,g_\Gamma)$.
    Moreover, a direct computation yields the scalar curvature as
    \[
        R_{g_\Gamma}
        =
        \frac{N-1}{\Gamma^2}\bigl(2\Gamma-(N-2)\bigr)
        u^{-2-\frac2\Gamma}|\nabla u|^2.
    \]
    Hence
    \[
        R_{g_\Gamma}\ge 0
        \qquad\Longleftrightarrow\qquad
        \Gamma\ge \frac{N-2}{2}.
    \]
    Thus $\frac{N-2}{2}$ appears as the scalar-curvature sign-change threshold for the conformal metric $g_\Gamma$. However, for $N\ge 3$,
    \[
        \frac{N-2}{2} \le \Gamma_2^{(N)}=\frac{(N-1)(N-2)}{N+1},
    \]
    so this threshold lies below the first bifurcation value. In this sense,
    $\frac{N-2}{2}$ is a natural algebraic threshold, but not the correct
    global rigidity threshold.
    We emphasise that for $\Gamma = N-2$ we have $H_{g_\Gamma} = 0$, so the boundary becomes a minimal hypersurface in the conformal metric, while $H_{g_\Gamma} > 0$ for $\Gamma > N-2$.

    We refer to \cite{MR2393076,MR4910987} for the role of nonnegative scalar curvature in capacity/rigidity inequalities for harmonic functions.
\end{remark}

\section{Rigidity for \texorpdfstring{$\Gamma=N-2$}{Gamma = N-2}}

In this section we prove Theorem~\ref{thm:critical-rigidity}. The key input is a sharp
boundary inequality of Agostiniani and Mazzieri for capacitary potentials. We recall the statement for the reader's convenience.

\begin{proposition}[Agostiniani--Mazzieri]\label{prop:AM-boundary}
    Let $D\subset\R^N$, $N\ge 3$, be a bounded smooth domain, and let $v\in \C^2(\R^N\setminus\overline D)\cap \C^1(\R^N\setminus D)$ solve
    \[
        \Delta v=0
        \quad\text{in }\R^N\setminus\overline D,
        \qquad
        v=1
        \quad\text{on }\partial D,
        \qquad
        v(x)\to0
        \quad\text{as }|x|\to\infty.
    \]
    Let $H_D$ denote the mean curvature of $\partial D$ with respect to the outer unit
    normal of $D$. Then, for every
    \[
        p\ge 2-\frac1{N-1},
    \]
    one has
    \begin{equation}\label{eq:AM-boundary}
        \|\partial_\nu v\|_{\L^p(\partial D)}
        \le
        \frac{N-2}{N-1}\|H_D\|_{\L^p(\partial D)}.
    \end{equation}
    Moreover, if equality holds in \eqref{eq:AM-boundary} for one admissible exponent $p$,
    then $v$ is rotationally symmetric with respect to some point $x_0\in\R^N$; that is,
    \[
        v(x)=f(|x-x_0|)
    \]
    for some one-variable function $f$.
\end{proposition}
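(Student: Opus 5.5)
This proposition is quoted from \cite[Theorem~4.1]{MR4037467}; if we were to reprove it, we would follow the monotonicity-formula approach. For $t\in(0,1]$ and $p\ge 1$ consider the level sets $\{v=t\}$ and the normalised flux-type quantities
\[
    U_p(t):=t^{-\frac{(p-1)(N-1)}{N-2}}\int_{\{v=t\}}|\nabla v|^p\dd\sigma .
\]
For the radial potential $v=|x|^{2-N}$ one checks that $U_p$ is constant in $t$. The plan has four steps.

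\textbf{Step 1: monotonicity.} Show that $U_p$ is monotone in $t$ on $(0,1]$ whenever $p\ge 2-\frac1{N-1}$. Differentiating in $t$ via the coarea formula and the divergence theorem, and using $\Delta v=0$, expresses $U_p'(t)$ as $t^{-\frac{(p-1)(N-1)}{N-2}}$ times a bulk integral over $\{v>t\}$ (or $\{v<t\}$). The integrand involves $|\nabla v|^{p-2}$, the Hessian $\nabla^2 v$, and the drift term produced by the weight. The cleanest way to organise this is the conformal reformulation $g=v^{2/(N-2)}\delta$, with $\varphi=-\log v$, which turns the exterior region into an asymptotically cylindrical manifold. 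There the Bochner formula for $|\nabla\varphi|_g^p$ yields a divergence identity whose integrand is nonnegative. The sign is obtained from the refined Kato inequality for harmonic functions,
\[
    |\nabla^2 v|^2\ge \frac{N}{N-1}\,\bigl|\nabla|\nabla v|\bigr|^2,
\]
and the condition $p\ge 2-\frac1{N-1}$ is exactly what makes the coefficient in front of $\bigl|\nabla|\nabla v|\bigr|^2$ absorbable. Critical points of $v$ are handled by working on regular values and with $|\nabla v|_\varepsilon$-regularisations. The boundary terms at infinity vanish because $v=c|x|^{2-N}+O(|x|^{1-N})$, together with the corresponding gradient asymptotics.

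\textbf{Step 2: evaluate at $t=1$.} Since $\{v=1\}=\partial D$, the sign of $U_p'(1)$ is an inequality for a boundary integral. Computing $\frac{\dd}{\dd t}\int_{\{v=t\}}|\nabla v|^p$ at a regular level, the mean curvature of the level set, with the normal $-\nabla v/|\nabla v|$, enters through
\[
    \operatorname{div}\Bigl(\frac{\nabla v}{|\nabla v|}\Bigr)
    =
    -\frac{\langle\nabla^2 v\,\nabla v,\nabla v\rangle}{|\nabla v|^3},
\]
which holds because $\Delta v=0$. This gives
\[
    \frac{N-2}{N-1}\int_{\partial D}|\nabla v|^{p-1}\,|H_D|\dd\sigma
    \ \ge\ \int_{\partial D}|\nabla v|^{p}\dd\sigma ,
\]
up to checking signs under the convention $\Hn_{\partial B_R}=-1/R$ and noting $|\partial_\nu v|=|\nabla v|$ on $\partial D$. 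Hölder's inequality with exponents $\frac{p}{p-1}$ and $p$ then gives $\|\partial_\nu v\|_{\L^p}^p\le\frac{N-2}{N-1}\|\partial_\nu v\|_{\L^p}^{p-1}\|H_D\|_{\L^p}$, which is \eqref{eq:AM-boundary}.

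\textbf{Step 3: rigidity.} If equality holds in \eqref{eq:AM-boundary}, then $U_p'(1)=0$, so the nonnegative bulk integrand vanishes identically on the exterior. Equality in the refined Kato inequality means that $\nabla^2 v$ has the special form it has for radial functions. In particular, the level sets of $v$ are totally umbilic, $|\nabla v|$ is constant on each level set, and there are no critical points. Hence every level set is a round sphere and $v$ is a function of $|x-x_0|$.

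\textbf{Main obstacle.} The delicate step is Step 1, namely the exact algebra showing that the bulk integrand is nonnegative precisely for $p\ge 2-\frac1{N-1}$, together with the justification of the differentiation across critical levels of $v$. There I would first try to establish the integrated, rather than pointwise-in-$t$, inequality $U_p(1)\ge\lim_{t\to0}U_p(t)$ by integrating the divergence identity over $\{v<1\}$. This avoids needing differentiability of $U_p$ at critical values, since $\mathrm{Crit}(v)$ has measure zero by analyticity of $v$.
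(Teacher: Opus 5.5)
Your proposal agrees with the paper, which proves this proposition only by citing \cite[Theorem~4.1]{MR4037467}, and your outline is the argument of that reference: the cylindrical conformal metric $g=v^{2/(N-2)}\delta$ with $\varphi=-\log v$, the weighted Bochner identity for $|\nabla\varphi|_g^{p-1}$ combined with the refined Kato inequality (this is exactly where $p-1\ge\frac{N-2}{N-1}$ enters), the sign of $U_p'(1)$ followed by H\"older, and rigidity from the vanishing bulk term. One correction to your fallback: the integrated inequality $U_p(1)\ge\lim_{t\to0}U_p(t)$ only gives a capacity-type lower bound for $\int_{\partial D}|\nabla v|^p$ and cannot yield Step~2, whereas integrating the divergence identity over all of $\{v<1\}$ gives precisely the sign of $U_p'(1)$ as a flux through $\partial D$ (a regular level by Hopf), which is all Steps~2 and~3 need; $\mathrm{Crit}(v)$ must then be handled by your $\varepsilon$-regularisation, since its having measure zero is not enough on its own.
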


\begin{proof}
    This is precisely \cite[Theorem~4.1]{MR4037467}.
\end{proof}

We write
\[
    H:=-(N-1)\Hn
\]
for the mean curvature of $\Sigma$ with respect to the outer unit normal of $\Omega$. At the value $\Gamma=N-2$, the boundary condition in \eqref{eq:normalized-problem}
becomes
\begin{equation}\label{eq:critical-boundary}
    \partial_\nu u=-\frac{N-2}{N-1}H.
\end{equation}

\begin{proof}[Proof of Theorem~\ref{thm:critical-rigidity}]
    By Remark~\ref{rem:regularity}, any $\C^{1,1}$ solution of
    \eqref{eq:normalized-problem} with $\Gamma=N-2\neq 0$ is smooth. Set
    \[
        E:=\R^N\setminus\overline\Omega,
        \qquad
        \Sigma:=\partial\Omega.
    \]

    We first claim that $E$ is connected. Suppose by contradiction that $U$ is a
    bounded connected component of $E$. Then $u$ is harmonic in $U$ and satisfies
    $u=1$ on $\partial U$. By the maximum principle,
    \[
        u= 1
        \qquad\text{in }U.
    \]
    Hence $\partial_\nu u=0$ on $\partial U$. By \eqref{eq:critical-boundary} we get
    $H=0$ on $\partial U$, so $\partial U$ is a compact minimal hypersurface without
    boundary in $\R^N$. This is impossible; see \cite[Corollary~3.7]{MR3969932}.
    Therefore $E$ is connected.

    Since $\Gamma=N-2$, we have on $\Sigma$
    \[
        \partial_\nu u=-\frac{N-2}{N-1}H.
    \]
    Therefore
    \[
        \|\partial_\nu u\|_{\L^2(\Sigma)}
        =
        \frac{N-2}{N-1}\|H\|_{\L^2(\Sigma)}.
    \]
    Thus equality holds in \eqref{eq:AM-boundary} for the admissible exponent $p=2$.
    Proposition~\ref{prop:AM-boundary} yields the existence of a point $x_0\in\R^N$
    and a function $f$ such that
    \[
        u(x)=f(|x-x_0|)
        \qquad\text{for all }x\in E.
    \]

    Since $E$ is the unbounded connected exterior domain, $u=1$ on $\Sigma$, and
    $u(x)\to 0$ as $|x|\to\infty$, the maximum principle gives
    \[
        0<u<1
        \qquad\text{in }E.
    \]
    We may now apply Lemma~\ref{lem:radial-implies-ball}. It follows that there exists
    $R>0$ such that
    \[
        \Omega=B_R(x_0).
    \]

    The volume constraint $|\Omega|=\omega_N$ implies $R=1$. Finally, the unique
    harmonic function in $\R^N\setminus\overline{B_1(x_0)}$ with boundary value $1$
    on $\partial B_1(x_0)$ and vanishing at infinity is
    \[
        u(x)=|x-x_0|^{2-N}.
    \]
\end{proof}

The following lemma is known, but we could not find a citable proof, so we include the argument for the reader's convenience.

\begin{lemma}\label{lem:radial-implies-ball}
    Let $N\ge 2$, let $\emptyset \neq \Omega\subset\R^N$ be bounded, and set
    \[
        E:=\R^N\setminus\overline\Omega.
    \]
    Assume that $E$ is connected. Suppose that for some $x_0\in\R^N$ and some
    function $f$ one has
    \[
        u(x)=f(|x-x_0|)
        \qquad\text{for all }x\in E,
    \]
    and that
    \[
        u=1\quad\text{on }\partial\Omega,
        \qquad
        0<u<1\quad\text{in }E.
    \]
    Then there exists $R>0$ such that $\Omega=B_R(x_0)$.
\end{lemma}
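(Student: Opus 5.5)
Since $u=f(|x-x_0|)$ on $E$, $u$ is constant on $E\cap\partial B_r(x_0)$ for every $r>0$. Set
\[
    I:=\{r>0:\ \partial B_r(x_0)\cap E\neq\emptyset\}.
\]
The plan is to show that $\partial\Omega$ lies in a single sphere centred at $x_0$, and then conclude via connectedness of $E$. We interpret the hypothesis $u=1$ on $\partial\Omega$ as the continuous extension of $u$ from $E$, as in the setting of Theorem~\ref{thm:critical-rigidity}; in particular $u$ is continuous on $E\cup\partial\Omega$. Note also that $\partial E\subset\partial\Omega$.

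\emph{Step 1: a level argument.} Let $y\in\partial\Omega$ with $|y-x_0|=r_0$. We claim that $\partial B_{r_0}(x_0)\cap E=\emptyset$. Suppose instead that some $z\in E$ has $|z-x_0|=r_0$. Then $u(z)=f(r_0)<1$. Since $y\in\partial\Omega$ and $\Omega$ is bounded with connected complement $E$, we have $y\in\partial E$. Choose points $y_k\in E$ with $y_k\to y$; by continuity $u(y_k)\to1$, that is, $f(|y_k-x_0|)\to 1$ with $|y_k-x_0|\to r_0$. This alone does not contradict $f(r_0)<1$ unless $f$ is continuous at $r_0$ along $I$.

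\emph{Step 2: continuity of $f$.} To get this, use that $E$ is open. A small ball around $z$ lies in $E$, so $f$ is defined on a neighbourhood $(r_0-\varepsilon,r_0+\varepsilon)$. Moreover $f$ is continuous there, because $u$ is continuous and we can restrict $u$ to a radial segment through $z$ inside that ball. Hence $f(|y_k-x_0|)\to f(r_0)<1$, contradicting the limit $1$ from Step 1. Therefore every sphere $\partial B_r(x_0)$ that meets $\partial\Omega$ is entirely contained in $\overline\Omega$.

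\emph{Step 3: the spheres through $\partial\Omega$ meet $\partial\Omega$ only in one radius.} Let $r_1:=\max\{|y-x_0|:y\in\partial\Omega\}$, which exists because $\Omega$ is bounded. By Step 2, $\partial B_{r_1}(x_0)\subset\overline\Omega$. Since $E$ is connected and unbounded, and $\{|x-x_0|>r_1\}$ contains no boundary point of $\Omega$, this region lies in $E$. Indeed, it is connected, disjoint from $\partial\Omega$, and contains far-away points of $E$.

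Next, suppose $r<r_1$ and some $y\in\partial\Omega$ has $|y-x_0|=r$. Then every point of $\partial B_r(x_0)$ lies in $\overline\Omega$. The set $E$ is connected, contains $\{|x-x_0|>r_1\}$, and avoids $\partial B_r(x_0)$, so $E\subset\{|x-x_0|>r\}$. Thus $\overline{B_r(x_0)}\subset\overline\Omega$. Applying the same reasoning to every point of $\partial\Omega$ shows that $E=\{|x-x_0|>\rho\}$ minus possibly some points, where $\rho$ is the minimal boundary radius.

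The main obstacle is to make this rigorous, namely to show that $\partial\Omega\subset\partial B_{r_1}(x_0)$. For a point $y\in\partial\Omega$ with $|y-x_0|<r_1$, $y$ is a limit of points of $E$, hence of points of $E$ at radii near $|y-x_0|$. By Step 2 those radii, taken in open intervals, are forbidden. So near $y$ the set $E$ can only lie at radii exactly in the closed set $\{|x-x_0|:x\in\partial\Omega\}$. I would therefore show that the set $I$ of radii meeting $E$ is open, being the image of an open set under $|x-x_0|$, and connected, being a continuous image of connected $E$. Hence $I$ is an interval $(\rho,\infty)$, and by Step 2 it is disjoint from the radii of $\partial\Omega$. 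Since $\partial\Omega\subset\partial E$ has radii in $\overline I\setminus I=\{\rho\}$, we get $\partial\Omega\subset\partial B_\rho(x_0)$. Then $E\subset\{|x-x_0|>\rho\}$, and equality holds because that region is connected and disjoint from $\partial\Omega$.

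Finally, $\Omega$ is contained in $\R^N\setminus\overline E=\overline{B_\rho(x_0)}$ and is open, so $\Omega\subset B_\rho(x_0)$. Also $B_\rho(x_0)\setminus\Omega$ is disjoint from $E$, so $B_\rho(x_0)\setminus\Omega\subset\partial\Omega\subset\partial B_\rho(x_0)$, which forces $B_\rho(x_0)\setminus\Omega=\emptyset$. Hence $\Omega=B_\rho(x_0)$ with $R=\rho>0$, where $\rho>0$ because $\Omega\neq\emptyset$ is open.
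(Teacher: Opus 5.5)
Your argument is essentially the paper's. Both work with the set $I$ of radii $r$ for which $\partial B_r(x_0)$ meets $E$. Both show that $I$ is open, connected and unbounded, hence $I=(\rho,\infty)$. Both use the clash between $f<1$ on $E$ and $u=1$ on $\partial\Omega$ to keep $\partial\Omega$ off every sphere with radius in $I$. Your variations are minor: you get openness of $I$ from the open map $x\mapsto|x-x_0|$ rather than from compactness of spheres. Your explicit proof that $f$ is continuous on $I$ also supplies a step the paper passes over when it calls $u(y)=f(r)$, $u(z)=1$ a contradiction.

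One justification is false, however. A point $y\in\partial\Omega$ need not lie in $\partial E$ just because $\Omega$ is bounded and $E$ is connected. Take $\Omega=B_1(x_0)\setminus\{p\}$ with $0<|p-x_0|<1$. Then $E=\{|x-x_0|>1\}$ is connected and $p\in\partial\Omega\setminus\partial E$. All hypotheses of the lemma hold, even under your continuity interpretation: take $u(x)=|x-x_0|^{-1}$ on $E\cup\partial B_1(x_0)$ and $u(p)=1$. Yet $\Omega$ is not a ball.

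So $\partial\Omega\subset\partial E$ is an extra regularity assumption, not a consequence of the hypotheses. It holds for the $C^{1,1}$ domains to which the lemma is applied, and the paper also uses it tacitly when it writes $\partial\Omega=\partial E$. You should state it as a hypothesis rather than derive it.

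You can also confine its use to the last line. In Step~1, suppose $\partial B_{r_0}(x_0)$ meets $E$ but is not contained in it. Then connectedness of the sphere gives a point of $\partial E\subset\partial\Omega$ on it, and your continuity argument applies at that point. This shows that every radius of $\partial\Omega$ lies outside $I$. Hence $\partial\Omega\subset\overline{B_\rho(x_0)}$, $E=\{|x-x_0|>\rho\}$ and $\overline\Omega=\overline{B_\rho(x_0)}$, all without any regularity. Regularity is then needed only to pass to $\Omega=B_\rho(x_0)$, for instance via $\Omega=\operatorname{int}\overline\Omega$.

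A minor slip: $\R^N\setminus\overline E=B_\rho(x_0)$, not $\overline{B_\rho(x_0)}$; your conclusion $\Omega\subset B_\rho(x_0)$ is unaffected.
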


\begin{proof}
    Define
    \[
        I:=\{|x-x_0|:x\in E\}\subset[0,\infty).
    \]

    We first claim that for every $r\in I$,
    \[
        \partial B_r(x_0)\subset E.
    \]
    Indeed, let $r\in I$. Then $\partial B_r(x_0)\cap E\neq\emptyset$. If
    $\partial B_r(x_0)\not\subset E$, then $\partial B_r(x_0)$ meets $\overline\Omega$. Since
    $\partial B_r(x_0)$ is connected for $N\ge2$, it must then meet $\partial\Omega$.
    Thus there exist
    \[
        y\in \partial B_r(x_0)\cap E,
        \qquad
        z\in \partial B_r(x_0)\cap\partial\Omega.
    \]
    But then
    \[
        u(y)=f(r)\in(0,1),
        \qquad
        u(z)=1,
    \]
    a contradiction. This proves the claim.

    Since $\partial B_r(x_0)\subset E$ and $E$ is open, compactness of $\partial B_r(x_0)$ gives
    $\varepsilon_r>0$ such that
    \[
        \partial B_s(x_0)\subset E
        \qquad\text{whenever }|s-r|<\varepsilon_r.
    \]
    Hence $I$ is open.

    On the other hand, $I$ is connected because it is the image of the connected set
    $E$ under the continuous map $x\mapsto |x-x_0|$. Since $E$ is unbounded,
    $I$ is unbounded above. Therefore
    \[
        I=(R,\infty)
    \]
    for some $R\ge0$. Moreover, $R =0$ is impossible as $\Omega \neq \emptyset$.

    Moreover, we have
    \[
        E=\bigcup_{r\in I}\partial B_r(x_0)
        =\{x\in\R^N:|x-x_0|>R\}.
    \]
    Thus
    \[
        \partial\Omega=\partial E=\partial B_R(x_0),
    \]
    and therefore $\Omega=B_R(x_0)$.
\end{proof}

\section{Rigidity in the regime \texorpdfstring{$\Gamma > N-2$}{Gamma > N-2}}

In this section we prove Theorem~\ref{thm:star-outer-rigidity}. Let $\Omega\subset\R^N$ be a bounded star-shaped $\C^{1,1}$ domain such that $|\Omega|=\omega_N$, and let
\[
    u\in \C^2(\R^N\setminus\overline\Omega)\cap \C^1(\R^N\setminus\Omega)
\]
solve \eqref{eq:normalized-problem}. Then Remark~\ref{rem:regularity} implies that $\Sigma$ and $u$ are smooth. Since the problem is translation invariant, we may translate $\Omega$ and assume that the star centre is the origin. Set
\[
    \Sigma=\partial\Omega.
\]
Since $\Omega$ is star-shaped, we have
\begin{equation}\label{eq:p-positive}
    x\cdot\nu \ge 0
    \qquad\text{on }\Sigma
\end{equation}
by \cite[Lemma on p. 515]{MR2597943}.

We first claim that $\R^N\setminus\overline\Omega$ is connected. Indeed, if $U$ were a bounded connected component of $\R^N\setminus\overline\Omega$, then along $\partial U$ the outer unit normal of $\Omega$ is $-\nu_U$, where $\nu_U$ is the outer unit normal of $U$. Hence, by the divergence theorem,
\[
    \int_{\partial U} x\cdot\nu\dd S
    =
    -\int_{\partial U} x\cdot\nu_U\dd S
    =
    -N|U|
    <0,
\]
which contradicts \eqref{eq:p-positive}. Therefore $\R^N\setminus\overline\Omega$ is connected.

We again pass from the normalised mean curvature $\Hn$ to the usual mean curvature
\[
    H:=-(N-1)\Hn,
\]
and we write
\[
    \A:=A-\frac{H}{N-1}g
\]
for the trace-free second fundamental form of $\Sigma$, where $g$ denotes the induced metric on $\Sigma$. The boundary condition in \eqref{eq:normalized-problem} becomes
\begin{equation}\label{eq:qH}
    \partial_\nu u=-\frac{\Gamma}{N-1}H+\Gamma-(N-2).
\end{equation}
Since $u=1$ on $\Sigma$ and $u\to0$ at infinity, the maximum principle gives $0<u<1$ in $\R^N\setminus\overline\Omega$, and the Hopf lemma yields
\begin{equation}\label{eq:q-negative}
    \partial_\nu u<0
    \qquad\text{on }\Sigma.
\end{equation}

The next lemma collects the analytic identities that follow from the harmonicity of $u$.

\begin{lemma}\label{lem:pohozaev}
    Let $u$ solve \eqref{eq:normalized-problem}. Then
    \begin{equation}\label{eq:energy}
        \int_{\R^N\setminus\overline\Omega}|\nabla u|^2\dd x=-\int_\Sigma \partial_\nu u\dd S,
    \end{equation}
    and
    \begin{equation}\label{eq:pohozaev}
        \int_\Sigma (x\cdot\nu)(\partial_\nu u)^2\dd S=-(N-2)\int_\Sigma \partial_\nu u\dd S.
    \end{equation}
\end{lemma}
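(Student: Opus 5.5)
The plan is to obtain both identities by integrating over the truncated region $E_R:=B_R\setminus\overline\Omega$, with $R$ large so that $\overline\Omega\subset B_R$, and then letting $R\to\infty$. The harmonic function $u$ has the standard decay at infinity for exterior harmonic functions vanishing at infinity in dimension $N\ge3$:
\[
    u=O(|x|^{2-N}),
    \qquad
    |\nabla u|=O(|x|^{1-N}),
\]
which follows from the Kelvin transform and removability of the singularity at the origin. By Remark~\ref{rem:regularity}, $u$ is smooth up to $\Sigma$, so all boundary integrals make sense.

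For \eqref{eq:energy}, I will multiply $\Delta u=0$ by $u$ and integrate by parts on $E_R$. The outward normal of $E_R$ on $\Sigma$ is $-\nu$, and $u=1$ there, which gives
\[
    \int_{E_R}|\nabla u|^2\dd x=-\int_\Sigma\partial_\nu u\dd S+\int_{\partial B_R}u\,\partial_r u\dd S.
\]
The last term is $O(R^{2-N}R^{1-N}R^{N-1})=O(R^{2-N})\to0$, so letting $R\to\infty$ yields \eqref{eq:energy}.

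For \eqref{eq:pohozaev}, I will use the Rellich--Pohozaev identity for harmonic $u$:
\[
    \operatorname{div}\Big((x\cdot\nabla u)\nabla u-\tfrac12|\nabla u|^2x\Big)=\tfrac{2-N}{2}|\nabla u|^2.
\]
Since $u$ is constant on $\Sigma$, we have $\nabla u=(\partial_\nu u)\nu$ there, so the boundary flux density through $\Sigma$ (outward normal $-\nu$) equals
\[
    -\Big((x\cdot\nu)(\partial_\nu u)^2-\tfrac12(\partial_\nu u)^2(x\cdot\nu)\Big)=-\tfrac12(x\cdot\nu)(\partial_\nu u)^2.
\]
The flux through $\partial B_R$ is bounded by $CR|\nabla u|^2R^{N-1}=O(R^{2-N})\to0$. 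Letting $R\to\infty$ therefore gives
\[
    -\tfrac12\int_\Sigma(x\cdot\nu)(\partial_\nu u)^2\dd S=\tfrac{2-N}{2}\int_{\R^N\setminus\overline\Omega}|\nabla u|^2\dd x,
\]
and substituting \eqref{eq:energy} yields \eqref{eq:pohozaev}.

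The only step requiring care is the decay at infinity, which justifies both the finiteness of the Dirichlet energy and the vanishing of the flux terms on $\partial B_R$. Here $N\ge3$ is used essentially, and the Kelvin-transform argument settles it.
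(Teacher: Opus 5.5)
Your proposal is correct and follows essentially the paper's own proof. Both truncate to $B_R\setminus\overline\Omega$, integrate $u\Delta u=0$ by parts for the energy identity, and integrate the Rellich--Pohozaev divergence identity (your vector field is just the negative of the paper's). Both then use $\nabla u=(\partial_\nu u)\nu$ on $\Sigma$ and the decay $u=O(|x|^{2-N})$, $|\nabla u|=O(|x|^{1-N})$ to kill the $\partial B_R$ terms, and finally substitute the energy identity; your Kelvin-transform argument simply spells out the decay that the paper takes for granted by calling $u$ the capacitary potential.
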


\begin{proof}
    Since $u$ is harmonic in the exterior of a bounded set and vanishes at infinity, it is the capacitary potential of $\Omega$. In particular,
    \[
        u(x)=O(|x|^{2-N}),
        \qquad
        |\nabla u(x)|=O(|x|^{1-N})
        \qquad\text{as }|x|\to\infty.
    \]
    Fix $R$ large enough that $\overline\Omega\subset B_R$, and set $D_R:=B_R\setminus\overline\Omega$. Integrating by parts on $D_R$ gives
    \[
        \int_{D_R}|\nabla u|^2\dd x
        =
        \int_{\partial B_R}u\partial_r u\dd S
        -
        \int_\Sigma u\,\partial_\nu u\dd S.
    \]
    Because $u=1$ on $\Sigma$ and the boundary term on $\partial B_R$ tends to zero as $R\to\infty$, we obtain \eqref{eq:energy}.

    For the Pohozaev identity we apply the divergence formula
    \[
        \operatorname{div}\Big(\frac12|\nabla u|^2x-(x\cdot\nabla u)\nabla u\Big)
        =
        \frac{N-2}{2}|\nabla u|^2
        \qquad\text{in }D_R,
    \]
    valid because $\Delta u=0$. Integrating over $D_R$, using the asymptotics at infinity, and letting $R\to\infty$, we get
    \[
        \frac{N-2}{2}\int_{\R^N\setminus\overline\Omega}|\nabla u|^2\dd x
        =
        \frac12\int_\Sigma (x\cdot\nu)(\partial_\nu u)^2\dd S.
    \]
    Now we insert \eqref{eq:energy} which gives \eqref{eq:pohozaev}.
\end{proof}

The next lemma records the geometric identities used below.

\begin{lemma}\label{lem:minkowski}
    Let $\Sigma=\partial\Omega$ be a closed orientable $\C^2$ hypersurface bounding a domain $\Omega\subset\R^N$.

    \smallskip
    \noindent\emph{(i) Minkowski identities.}
    One has
    \begin{align}
        \int_\Sigma x\cdot\nu\dd S
         & = N|\Omega|, \label{eq:minkowski0}                                                                 \\
        \int_\Sigma (x\cdot\nu)H\dd S
         & = (N-1)|\Sigma|, \label{eq:minkowski1}                                                             \\
        \int_\Sigma (x\cdot\nu)H^2\dd S
         & = (N-1)\int_\Sigma H\dd S+\frac{N-1}{N-2}\int_\Sigma (x\cdot\nu)|\A|^2\dd S. \label{eq:minkowski2}
    \end{align}

    \smallskip
    \noindent\emph{(ii) Weighted Cauchy--Schwarz estimate.}
    If, in addition, $x\cdot\nu \ge 0$ on $\Sigma$, then
    \begin{equation}\label{eq:U-lower-bound}
        \int_\Sigma (x\cdot\nu)|\A|^2\dd S
        \ge
        (N-2)\Big(\frac{(N-1)|\Sigma|^2}{N|\Omega|}-\int_\Sigma H\dd S\Big).
    \end{equation}
\end{lemma}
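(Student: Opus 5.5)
We prove the three Minkowski identities by integrating tangential divergences of vector fields built from the position vector over $\Sigma$, and obtain (ii) from \eqref{eq:minkowski2} combined with Cauchy--Schwarz. Throughout, $x^T$ denotes the tangential part of $x$ and $p:=x\cdot\nu$ the support function.

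\textbf{Identities \eqref{eq:minkowski0} and \eqref{eq:minkowski1}.} Identity \eqref{eq:minkowski0} is the divergence theorem applied to the field $x$ on $\Omega$, since $\operatorname{div}x=N$. For \eqref{eq:minkowski1} we use the classical relation
\[
    \operatorname{div}_\Sigma x^T=(N-1)-Hp
\]
on $\Sigma$. It follows from $\operatorname{div}_\Sigma x=N-1$ together with $\operatorname{div}_\Sigma(p\nu)=pH$, where $H=\operatorname{div}_\Sigma\nu$ is positive on spheres with the outer normal. Integrating over the closed hypersurface $\Sigma$ gives \eqref{eq:minkowski1}.

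\textbf{Identity \eqref{eq:minkowski2}.} This is the second-order Minkowski formula. We consider the Newton tensor $T_1:=Hg-A$, which is divergence-free on hypersurfaces of $\R^N$ by Codazzi. Its trace is $(N-2)H$, and $\langle T_1,A\rangle=H^2-|A|^2$. A computation with $\nabla_i x^T_j=g_{ij}-pA_{ij}$ gives
\[
    \operatorname{div}_\Sigma\bigl(T_1(x^T)\bigr)=(N-2)H-p\,(H^2-|A|^2).
\]
Integrating yields $\int_\Sigma p(H^2-|A|^2)\dd S=(N-2)\int_\Sigma H\dd S$. Substituting $|A|^2=|\A|^2+\frac{H^2}{N-1}$, we get
\[
    \frac{N-2}{N-1}\int_\Sigma pH^2\dd S-\int_\Sigma p|\A|^2\dd S=(N-2)\int_\Sigma H\dd S,
\]
and multiplying by $\frac{N-1}{N-2}$ (for $N\ge3$) gives \eqref{eq:minkowski2}. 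The main care needed is getting the sign conventions and the index computation of the divergence right. For $\C^2$ regularity one approximates, or one uses the smoothness available from Remark~\ref{rem:regularity}.

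\textbf{Estimate (ii).} We assume $p\ge0$. By Cauchy--Schwarz together with \eqref{eq:minkowski1} and \eqref{eq:minkowski0},
\[
    (N-1)^2|\Sigma|^2=\Bigl(\int_\Sigma pH\dd S\Bigr)^2\le\int_\Sigma p\dd S\int_\Sigma pH^2\dd S=N|\Omega|\int_\Sigma pH^2\dd S.
\]
Inserting \eqref{eq:minkowski2} on the right gives
\[
    \frac{(N-1)^2|\Sigma|^2}{N|\Omega|}\le(N-1)\int_\Sigma H\dd S+\frac{N-1}{N-2}\int_\Sigma p|\A|^2\dd S.
\]
Multiplying by $\frac{N-2}{N-1}$ and rearranging gives \eqref{eq:U-lower-bound}. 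The only nontrivial step in the whole argument is the tensor identity for $\operatorname{div}_\Sigma(T_1(x^T))$.
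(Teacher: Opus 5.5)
Your proposal is correct and matches the paper's proof: \eqref{eq:minkowski0} by the divergence theorem, \eqref{eq:minkowski2} from the second Minkowski formula $\int_\Sigma (x\cdot\nu)(H^2-|A|^2)\dd S=(N-2)\int_\Sigma H\dd S$ combined with $|A|^2=|\A|^2+\frac{H^2}{N-1}$, and (ii) by Cauchy--Schwarz with weight $x\cdot\nu$. The only difference is that you derive the two Minkowski formulas yourself, via $\operatorname{div}_\Sigma x^T=(N-1)-H\,(x\cdot\nu)$ and the divergence-free Newton tensor $Hg-A$ (with approximation handling the $\C^2$ case), whereas the paper simply cites them from the literature.
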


\begin{proof}
    \noindent\emph{(i)} The identity \eqref{eq:minkowski0} is the divergence theorem applied to the vector field $x$. The identity \eqref{eq:minkowski1} is the first Euclidean Minkowski formula; see \cite[\S5.5.2, formula~(5.70)]{MR4809877}. Moreover, \cite[\S5.5.2, formula~(5.79)]{MR4809877} gives
    \[
        \int_\Sigma (x\cdot\nu)\bigl(H^2-|A|^2\bigr)\dd S
        =
        (N-2)\int_\Sigma H\dd S.
    \]
    Using
    \[
        |A|^2=|\A|^2+\frac{H^2}{N-1},
    \]
    and rearranging then yields \eqref{eq:minkowski2}.

    \smallskip
    \noindent\emph{(ii)} Assume now that $x\cdot\nu\ge 0$ on $\Sigma$. By Cauchy--Schwarz with the nonnegative weight $x\cdot\nu$,
    \[
        \Big(\int_\Sigma (x\cdot\nu)H\dd S\Big)^2
        \le
        \Big(\int_\Sigma x\cdot\nu\dd S\Big)
        \Big(\int_\Sigma (x\cdot\nu)H^2\dd S\Big).
    \]
    Using \eqref{eq:minkowski0}--\eqref{eq:minkowski2} from part~(i), we obtain
    \[
        (N-1)^2|\Sigma|^2
        \le
        N|\Omega|
        \Bigl((N-1)\int_\Sigma H\dd S+\frac{N-1}{N-2}\int_\Sigma (x\cdot\nu)|\A|^2\dd S\Bigr),
    \]
    which is equivalent to \eqref{eq:U-lower-bound}.
\end{proof}

The Pohozaev identity and the Minkowski formulas combine into the following defect identity.

\begin{proposition}\label{prop:defect-identity}
    Set
    \[
        \lambda:=\Gamma-(N-2)>0.
    \]
    Then
    \begin{align}\label{eq:defect-identity}
         & \frac{\Gamma^2}{(N-1)(N-2)}\int_\Sigma (x\cdot\nu)|\A|^2\dd S
        \\
         & \quad +
        \frac{\Gamma\lambda}{N-1}\int_\Sigma H\dd S
        -
        \lambda(2\Gamma-(N-2))|\Sigma|
        +
        \lambda^2|\partial B_1|
        =0 \nonumber.
    \end{align}
\end{proposition}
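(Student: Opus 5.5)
The identity in Proposition~\ref{prop:defect-identity} is purely algebraic once we have the Pohozaev identity of Lemma~\ref{lem:pohozaev} and the Minkowski identities of Lemma~\ref{lem:minkowski}(i). The plan is to substitute the boundary condition \eqref{eq:qH} into both sides of \eqref{eq:pohozaev}. This expresses everything in terms of boundary integrals of $H$, weighted by $1$ or by $x\cdot\nu$. We then eliminate every weighted integral using \eqref{eq:minkowski0}--\eqref{eq:minkowski2}. Star-shapedness and the sign condition $\lambda>0$ are not needed for the identity itself, only smoothness of $\Sigma$ (Remark~\ref{rem:regularity}) and the decay at infinity already used in Lemma~\ref{lem:pohozaev}.

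Write $q:=\partial_\nu u=-\frac{\Gamma}{N-1}H+\lambda$ with $\lambda=\Gamma-(N-2)$. Expanding the square, the left-hand side of \eqref{eq:pohozaev} is
\[
\frac{\Gamma^2}{(N-1)^2}\int_\Sigma (x\cdot\nu)H^2\dd S-\frac{2\Gamma\lambda}{N-1}\int_\Sigma (x\cdot\nu)H\dd S+\lambda^2\int_\Sigma x\cdot\nu\dd S.
\]
We replace the three weighted integrals as follows:
\begin{itemize}
\item the first by \eqref{eq:minkowski2}, giving $(N-1)\int_\Sigma H\dd S+\frac{N-1}{N-2}\int_\Sigma(x\cdot\nu)|\A|^2\dd S$;
\item the second by \eqref{eq:minkowski1}, giving $(N-1)|\Sigma|$;
\item the third by \eqref{eq:minkowski0} together with $|\Omega|=\omega_N$, giving $N\omega_N=|\partial B_1|$.
\end{itemize}
The left-hand side then becomes
\[
\frac{\Gamma^2}{(N-1)(N-2)}\int_\Sigma(x\cdot\nu)|\A|^2\dd S+\frac{\Gamma^2}{N-1}\int_\Sigma H\dd S-2\Gamma\lambda|\Sigma|+\lambda^2|\partial B_1|.
\]

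The right-hand side of \eqref{eq:pohozaev} is
\[
-(N-2)\int_\Sigma q\dd S=\frac{(N-2)\Gamma}{N-1}\int_\Sigma H\dd S-(N-2)\lambda|\Sigma|.
\]
Subtracting it from the left-hand side, the coefficient of $\int_\Sigma H\dd S$ is $\frac{\Gamma(\Gamma-(N-2))}{N-1}=\frac{\Gamma\lambda}{N-1}$. The coefficient of $|\Sigma|$ is $-2\Gamma\lambda+(N-2)\lambda=-\lambda(2\Gamma-(N-2))$. This is exactly \eqref{eq:defect-identity}.

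There is no real obstacle here. The only points needing care are the sign conventions: $H=-(N-1)\Hn$ is positive on spheres, and $\nu$ is the outer normal of $\Omega$, so it points into the exterior domain. Getting these right ensures that \eqref{eq:qH} and \eqref{eq:pohozaev} are combined with consistent orientations. As a sanity check, the unit ball satisfies \eqref{eq:defect-identity}: there $\A=0$, $H=N-1$ and $|\Sigma|=|\partial B_1|$, so the identity reduces to $\Gamma\lambda-\lambda(2\Gamma-(N-2))+\lambda^2=0$, which holds.
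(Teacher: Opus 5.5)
Your proposal is correct and follows the paper's proof essentially step for step. You substitute $\partial_\nu u=-\frac{\Gamma}{N-1}H+\lambda$ into both sides of \eqref{eq:pohozaev}, eliminate the weighted integrals via \eqref{eq:minkowski0}--\eqref{eq:minkowski2} with $N|\Omega|=|\partial B_1|$, and collect coefficients; your remarks that star-shapedness and $\lambda>0$ play no role in the identity itself, and your unit-ball sanity check, are also accurate.
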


\begin{proof}
    We recall from \eqref{eq:qH}:
    \[
        \partial_\nu u=-\frac{\Gamma}{N-1}H+\lambda.
    \]
    Expanding the left-hand side of \eqref{eq:pohozaev} and using \eqref{eq:minkowski0}--\eqref{eq:minkowski2}, we get
    \begin{align*}
        \int_\Sigma (x\cdot\nu)(\partial_\nu u)^2\dd S
         & =
        \frac{\Gamma^2}{(N-1)^2}\int_\Sigma (x\cdot\nu)H^2\dd S
        -
        \frac{2\Gamma\lambda}{N-1}\int_\Sigma (x\cdot\nu)H\dd S
        \\
         & \quad+
        \lambda^2\int_\Sigma x\cdot\nu\dd S                           \\
         & =
        \frac{\Gamma^2}{N-1}\int_\Sigma H\dd S
        +
        \frac{\Gamma^2}{(N-1)(N-2)}\int_\Sigma (x\cdot\nu)|\A|^2\dd S \\
         & \qquad
        -
        2\Gamma\lambda |\Sigma|
        +
        \lambda^2|\partial B_1|.
    \end{align*}
    On the other hand,
    \begin{align*}
        -(N-2)\int_\Sigma \partial_\nu u\dd S
         & =
        -(N-2)\Bigl(-\frac{\Gamma}{N-1}\int_\Sigma H\dd S+\lambda |\Sigma|\Bigr) \\
         & =
        \frac{(N-2)\Gamma}{N-1}\int_\Sigma H\dd S-(N-2)\lambda |\Sigma|.
    \end{align*}
    Inserting these identities into \eqref{eq:pohozaev} gives \eqref{eq:defect-identity}.
\end{proof}

\begin{proof}[Proof of Theorem~\ref{thm:star-outer-rigidity}]
    Set
    \[
        \lambda:=\Gamma-(N-2)>0.
    \]

    Since $\R^N \setminus \overline \Omega$ is connected, $u$ is the capacitary potential of $\Omega$, so Proposition~\ref{prop:AM-boundary} applies. Since
    \[
        -\partial_\nu u
        =
        \frac{\Gamma}{N-1}H-\lambda
        \qquad\text{on }\Sigma,
    \]
    the case $p=2$ in Proposition~\ref{prop:AM-boundary} gives
    \[
        \left\|\frac{\Gamma}{N-1}H-\lambda\right\|_{\L^2(\Sigma)}
        =
        \|\partial_\nu u\|_{\L^2(\Sigma)}
        \le
        \frac{N-2}{N-1}\|H\|_{\L^2(\Sigma)}.
    \]
    On the other hand, by the triangle inequality in $\L^2(\Sigma)$,
    \[
        \frac{\Gamma}{N-1}\|H\|_{\L^2(\Sigma)}-\lambda |\Sigma|^{1/2}
        \le
        \left\|\frac{\Gamma}{N-1}H-\lambda\right\|_{\L^2(\Sigma)}.
    \]
    Combining the last two estimates and using $\lambda=\Gamma-(N-2)$, we find
    \[
        \frac{\lambda}{N-1}\|H\|_{\L^2(\Sigma)}
        \le
        \lambda |\Sigma|^{1/2}.
    \]
    Since $\lambda>0$, it follows that
    \[
        \|H\|_{\L^2(\Sigma)}
        \le
        (N-1)|\Sigma|^{1/2}.
    \]
    Hence, by H\"older's inequality,
    \begin{equation}\label{eq:star-outer-M-bound}
        \int_\Sigma H\dd S
        \le
        |\Sigma|^{1/2}\|H\|_{\L^2(\Sigma)}
        \le
        (N-1)|\Sigma|.
    \end{equation}

    Using \eqref{eq:U-lower-bound} in \eqref{eq:defect-identity} and $\Gamma^2/((N-1)(N-2))>0$, we obtain
    \[
        0
        \ge
        \frac{\Gamma^2|\Sigma|^2}{|\partial B_1|}
        -
        \frac{\Gamma(N-2)}{N-1}\int_\Sigma H\dd S
        -
        \lambda(2\Gamma-(N-2))|\Sigma|
        +
        \lambda^2|\partial B_1|.
    \]
    Applying \eqref{eq:star-outer-M-bound} gives
    \[
        0
        \ge
        \frac{\Gamma^2|\Sigma|^2}{|\partial B_1|}
        -
        \Gamma(N-2)|\Sigma|
        -
        \lambda(2\Gamma-(N-2))|\Sigma|
        +
        \lambda^2|\partial B_1|.
    \]
    Since $\lambda=\Gamma-(N-2)$, this simplifies to
    \[
        0
        \ge
        \frac{|\Sigma|-|\partial B_1|}{|\partial B_1|}
        \bigl(\Gamma^2|\Sigma|-\lambda^2|\partial B_1|\bigr).
    \]
    By the isoperimetric inequality,
    \[
        |\Sigma|\ge |\partial B_1|.
    \]
    On the other hand,
    \[
        \Gamma^2|\Sigma|-\lambda^2|\partial B_1|
        \ge
        (\Gamma^2-\lambda^2)|\partial B_1|
        =
        (N-2)(2\Gamma-(N-2))|\partial B_1|
        >0.
    \]
    Therefore $|\Sigma|=|\partial B_1|$. Equality in the isoperimetric inequality implies that $\Omega$ is a ball, and the volume condition $|\Omega|=\omega_N$ forces its radius to be one.
\end{proof}

\section{Rigidity in the regime \texorpdfstring{$\Gamma \le 0$}{Gamma <= 0}}

In the case $\Gamma = 0$ the result is contained in \cite{MR1463801}.
\begin{proof}[Proof of Theorem \ref{thm:gamma-zero}]
    Any bounded connected component of $\R^N\setminus\overline\Omega$ would force
    $u= 1$ there by the maximum principle, contradicting
    $\partial_\nu u=-(N-2)$ on its boundary. Hence
    $\R^N\setminus\overline\Omega$ is connected, and therefore
    \[
        0<u<1
        \qquad\text{in }\R^N\setminus\overline\Omega.
    \]
    Since $\Gamma=0$, the boundary condition in \eqref{eq:normalized-problem} is
    \[
        \partial_\nu u=-(N-2)
        \qquad\text{on }\partial\Omega.
    \]
    Thus Reichel's theorem \cite[Theorem~1]{MR1463801} applies and yields that
    $\Omega$ is a ball.
\end{proof}

To treat $\Gamma <0$ we revisit Reichel's adaptation \cite{MR1463801} of Serrin's moving plane method \cite{MR333220} to the exterior case. We refer to \cite{MR1751289} for a textbook introduction to Serrin's moving plane method. The idea is that when $\Gamma < 0$, the sign of the mean curvature is compatible with the conclusions of Hopf's lemma and Serrin's corner lemma.

\begin{proof}
    By Remark~\ref{rem:regularity}, any $\C^{1,1}$ solution of
    \eqref{eq:normalized-problem} with $\Gamma < 0$ is smooth. Set
    \[
        E:=\R^N\setminus\overline\Omega,
        \qquad
        \Sigma:=\partial\Omega.
    \]
    We again write
    \[
        H:=-(N-1)\Hn,
    \]
    so that the boundary condition becomes
    \begin{equation}\label{eq:negative-qH}
        \partial_\nu u=-\frac{\Gamma}{N-1}H+\Gamma-(N-2).
    \end{equation}

    We first claim that $E$ is connected. Suppose by contradiction that $U$ is a bounded connected component of $E$. Then $u$ is harmonic in $U$ and satisfies $u=1$ on $\partial U$, hence by the maximum principle
    \[
        u = 1
        \qquad\text{in }U.
    \]
    Therefore $\partial_\nu u=0$ on $\partial U$, and \eqref{eq:negative-qH} yields
    \[
        H=\frac{N-1}{\Gamma}\bigl(\Gamma-(N-2)\bigr)>0
        \qquad\text{on }\partial U,
    \]
    where $H$ is computed with respect to the outer normal $\nu$ of $\Omega$.
    Let $\nu_U:=-\nu$ be the outer unit normal of $U$, and let $H_U:=-H$ be the mean curvature of $\partial U$ with respect to $\nu_U$. Then $H_U<0$ is constant on $\partial U$. The first Minkowski formula \eqref{eq:minkowski1} of Lemma \ref{lem:minkowski} applied to the bounded domain $U$ gives
    \[
        (N-1)|\partial U|
        =
        \int_{\partial U}(x\cdot\nu_U)H_U\dd S
        =
        H_U\int_{\partial U}x\cdot\nu_U\dd S
        =
        NH_U|U|<0,
    \]
    a contradiction. Thus $E$ is connected.

    Since $u=1$ on $\Sigma$ and $u(x)\to0$ as $|x|\to\infty$, the maximum principle gives
    \[
        0<u<1
        \qquad\text{in }E,
    \]
    and the Hopf lemma yields
    \begin{equation}\label{eq:negative-hopf}
        \partial_\nu u<0
        \qquad\text{on }\Sigma.
    \end{equation}
    \medskip

    We now perform the method of moving planes, following Reichel's proof of the exterior Serrin problem \cite{MR1463801}. We also refer to \cite{MR1463801} for very nice illustrations. Fix a direction and, after a rotation, assume it is $e_1$. For $\lambda\in\R$ set
    \[
        T_\lambda:=\{x_1=\lambda\},
        \qquad
        H_\lambda^-:=\{x_1<\lambda\},
        \qquad
        x^\lambda:=(2\lambda-x_1,x_2,\dots,x_N).
    \]
    Let
    \[
        \Omega_\lambda:=\Omega\cap\{x_1>\lambda\},
        \qquad
        \Omega_\lambda':=\{x^\lambda:x\in\Omega_\lambda\}.
    \]
    Since $\Omega$ is bounded, the set
    \[
        \Lambda
        :=
        \bigl\{\lambda\in\R:\Omega_\mu'\subset\Omega
        \text{ for every }\mu\ge\lambda\bigr\}
    \]
    is nonempty.
    Define
    \[
        \lambda_*:=\inf\Lambda.
    \]
    We note that
    \[
        \Omega_{\lambda_*}'\subset\Omega.
    \]
    Indeed, let $x\in\Omega_{\lambda_*}$. Choose $\lambda_n\downarrow\lambda_*$ with $\lambda_n<x_1$. Since $\lambda_n>\lambda_*$, we have $\lambda_n\in\Lambda$, hence $x^{\lambda_n}\in\Omega$ for every $n$. Passing to the limit gives $x^{\lambda_*}\in\overline{\Omega}$. Therefore $\Omega_{\lambda_*}'\subset\overline{\Omega}$, and since $\Omega_{\lambda_*}'$ is open while $\Omega$ is the interior of $\overline{\Omega}$, it follows that $\Omega_{\lambda_*}'\subset\Omega$.

    For $\lambda>\lambda_*$ we set
    \[
        E_\lambda:=E\cap H_\lambda^-,
        \qquad
        w_\lambda(x):=u(x)-u(x^\lambda),
        \qquad x\in E_\lambda.
    \]
    Because $\Omega_\lambda'\subset\Omega$, every $x\in E_\lambda$ satisfies $x^\lambda\in E$, so $w_\lambda$ is well-defined and harmonic in $E_\lambda$. Moreover,
    \[
        w_\lambda=0
        \qquad\text{on }T_\lambda\cap\overline{E_\lambda},
    \]
    while on $\Sigma\cap H_\lambda^-$ we have
    \[
        w_\lambda(x)=1-u(x^\lambda).
    \]
    We claim that $x^\lambda\in E\cup\Sigma$. Indeed, if $x^\lambda\in\Omega$,
    then $x_1^\lambda>\lambda$, so $x^\lambda\in\Omega_\lambda$, and hence
    \[
        x=(x^\lambda)^\lambda\in\Omega_\lambda'\subset\Omega,
    \]
    contrary to $x\in\Sigma$. Thus $x^\lambda\notin\Omega$, that is,
    $x^\lambda\in E\cup\Sigma$. Since $0<u<1$ in $E$ and $u=1$ on $\Sigma$,
    it follows that $u(x^\lambda)\le1$, and therefore
    \[
        w_\lambda(x)\ge0.
    \]
    Finally, $w_\lambda(x)\to0$ as $|x|\to\infty$ with $x\in E_\lambda$. Hence the maximum principle yields
    \[
        w_\lambda\ge0
        \qquad\text{in }E_\lambda
        \qquad(\lambda>\lambda_*).
    \]
    Passing to the limit gives
    \[
        w_{\lambda_*}\ge0
        \qquad\text{in }E_{\lambda_*}.
    \]
    By the strong maximum principle, on each connected component $D$ of
    $E_{\lambda_*}$ either
    \[
        w_{\lambda_*}\equiv 0
        \qquad\text{or}\qquad
        w_{\lambda_*}>0
        \quad\text{in }D.
    \]

    We now use Reichel's Step~(I), adapted to the present notation: if
    $w_{\lambda_*}\equiv0$ on one connected component of $E_{\lambda_*}$,
    then $E$ is symmetric with respect to $T_{\lambda_*}$, and hence so is
    $\Omega$; see \cite[Step~(I), pp.~385--386]{MR1463801}. Therefore, unless
    $\Omega$ is already symmetric with respect to $T_{\lambda_*}$, we have
    \[
        w_{\lambda_*}>0
        \qquad\text{in every connected component of }E_{\lambda_*}.
    \]

    At the critical position, the geometric alternative of the
    moving-plane method applies: either the reflected cap $\Omega_{\lambda_*}'$
    is internally tangent to $\Sigma$ at some point $P\notin T_{\lambda_*}$,
    or the plane $T_{\lambda_*}$ meets $\Sigma$ orthogonally at some point
    $Q\in\Sigma\cap T_{\lambda_*}$; compare \cite[p.~384, Figure~2]{MR1463801}
    and \cite[Step~(VI), pp.~388--390]{MR1463801}.

    We now rule out both possibilities, following Reichel's final step, with
    the new input being the mean-curvature term in the boundary condition.

    \medskip
    We first treat the interior tangency case, following
    \cite[Step~(VI)(a), p.~388]{MR1463801}. Let $D_P$ be the connected
    component of $E_{\lambda_*}$ whose boundary contains $P$. By the
    previous paragraph,
    \[
        w_{\lambda_*}>0
        \qquad\text{in }D_P.
    \]
    Let
    \[
        \Sigma_{\lambda_*}':=\partial\Omega_{\lambda_*}'\setminus T_{\lambda_*}
    \]
    denote the reflected curved boundary.
    At the touching point $P$, the hypersurfaces $\Sigma$ and $\Sigma_{\lambda_*}'$ are tangent, have the same outer normal $\nu(P)$, and $\Sigma_{\lambda_*}'$ lies locally on the interior side of $\Sigma$. After translation and rotation we may assume
    \[
        P=0,
        \qquad
        \nu(P)=e_N,
    \]
    and that, in a neighbourhood of $0$, both hypersurfaces are graphs over $\R^{N-1}$:
    \[
        \Sigma=\{x_N=\varphi(x')\},
        \qquad
        \Sigma_{\lambda_*}'=\{x_N=\psi(x')\},
    \]
    with
    \[
        \varphi(0)=\psi(0)=0,
        \qquad
        \nabla\varphi(0)=\nabla\psi(0)=0,
        \qquad
        \varphi\ge\psi.
    \]
    Thus $\varphi-\psi$ has a local minimum at $0$, so
    \[
        D^2\varphi(0)-D^2\psi(0)\ge0
    \]
    as quadratic forms, and therefore
    \[
        \Delta\varphi(0)\ge\Delta\psi(0).
    \]
    Since for such graphs the normalised mean curvature with respect to the upward normal is
    \[
        \Hn=\frac1{N-1}\Delta\varphi
        \qquad\text{at points where }\nabla\varphi=0,
    \]
    we obtain
    \begin{equation}\label{eq:negative-curv-comp}
        \Hn_\Sigma(P)\ge \Hn_{\Sigma_{\lambda_*}'}(P).
    \end{equation}

    Now $w_{\lambda_*}(P)=0$, and since $\nu(P)$ points into $D_P$, the Hopf lemma gives
    \[
        \partial_\nu w_{\lambda_*}(P)>0.
    \]
    On the other hand, reflection preserves $\Hn$, and the reflected function
    \[
        u^{\lambda_*}(x):=u(x^{\lambda_*})
    \]
    satisfies the same overdetermined condition on $\Sigma_{\lambda_*}'$. Hence
    \begin{align*}
        \partial_\nu w_{\lambda_*}(P)
         & =
        \partial_\nu u(P)-\partial_\nu u^{\lambda_*}(P) \\
         & =
        \Gamma\bigl(\Hn_\Sigma(P)-\Hn_{\Sigma_{\lambda_*}'}(P)\bigr)
        \le 0
    \end{align*}
    by \eqref{eq:negative-curv-comp}, because $\Gamma<0$. This contradiction rules out the interior tangency case.

    \medskip
    It remains to treat the orthogonality case. We use the same local
    coordinate setup as in \cite[Step~(VI)(b), pp.~389--390]{MR1463801}. Let
    $D_Q$ denote the connected component of $E_{\lambda_*}$ whose boundary
    contains $Q$. Again,
    \[
        w_{\lambda_*}>0
        \qquad\text{in }D_Q.
    \]
    After translation and rotation we may assume
    \[
        \lambda_*=0,
        \qquad
        Q=0,
        \qquad
        T_{\lambda_*}=\{x_1=0\},
        \qquad
        \nu(Q)=e_N.
    \]
    Near $0$, write $\Sigma$ as a graph
    \[
        \Sigma=\{x_N=\varphi(x_1,\xi)\},
        \qquad
        \xi=(x_2,\dots,x_{N-1}),
    \]
    with
    \[
        \Omega=\{x_N<\varphi(x_1,\xi)\},
        \qquad
        \varphi(0)=0,
        \qquad
        \nabla\varphi(0)=0.
    \]
    Since the reflected cap is contained in $\Omega$ at the critical position, we have
    \[
        \varphi(x_1,\xi)\ge \varphi(-x_1,\xi)
        \qquad\text{for }x_1<0
    \]
    in a neighbourhood of $0$. Dividing by $2x_1<0$ and letting $x_1\uparrow0$, we obtain
    \[
        \partial_{x_1}\varphi(0,\xi)\le0
        \qquad\text{for $\xi$ near $0$}.
    \]
    Since $\partial_{x_1}\varphi(0,0)=0$, the function $\xi\mapsto\partial_{x_1}\varphi(0,\xi)$ has a local maximum at $\xi=0$. Hence
    \[
        \partial_{x_j}\partial_{x_1}\varphi(0)=0
        \qquad(2\le j\le N-1),
    \]
    and the matrix
    \[
        B:=\bigl(\partial_{x_j}\partial_{x_k}\partial_{x_1}\varphi(0)\bigr)_{2\le j,k\le N-1}
    \]
    is negative semidefinite. In particular,
    \[
        \operatorname{tr}B\le0.
    \]
    In particular, for
    \[
        h(t):=\varphi(t,0)-\varphi(-t,0)
    \]
    we have $h(t)\ge0$ for $t<0$, while
    \[
        h(t)=\frac13\,\partial_{x_1}^3\varphi(0)t^3+o(t^3)
        \qquad\text{as }t\to0.
    \]
    Since $t^3<0$ for $t<0$, it follows that
    \[
        \partial_{x_1}^3\varphi (0)\le0.
    \]

    At points where $\nabla\varphi=0$, the normalised mean curvature with respect to the upward normal is
    \[
        \Hn=\frac1{N-1}\operatorname{div}\!\left(\frac{\nabla\varphi}{\sqrt{1+|\nabla\varphi|^2}}\right),
    \]
    hence
    \[
        \partial_{x_1}\Hn(0)
        =
        \frac1{N-1}\Bigl(\varphi_{111}(0)+\operatorname{tr}B\Bigr)
        \le0.
    \]
    Define
    \[
        q(x_1,\xi):=\partial_\nu u\bigl(x_1,\xi,\varphi(x_1,\xi)\bigr).
    \]
    By the boundary condition,
    \[
        q=\Gamma\Hn+\Gamma-(N-2),
    \]
    so, because $\Gamma<0$,
    \[
        \partial_{x_1}q(0)=\Gamma\,\partial_{x_1}\Hn(0)\ge0.
    \]
    On the other hand, differentiating the boundary identity
    \[
        u\bigl(x_1,\xi,\varphi(x_1,\xi)\bigr)=1
    \]
    shows that
    \[
        \partial_{x_i}u(0)=0
        \qquad(1\le i\le N-1).
    \]
    Writing
    \[
        q
        =
        \frac{\partial_{x_N}u-\sum_{i=1}^{N-1}\partial_{x_i}u \partial_{x_i}\varphi}{\sqrt{1+|\nabla\varphi|^2}}
    \]
    on $\Sigma$, we therefore obtain
    \[
        \partial_{x_1}q(0)=\partial_{x_N}\partial_{x_1}u(0).
    \]
    Consequently,
    \[
        \partial_{x_N}\partial_{x_1} u(0)\ge0.
    \]

    Now we abbreviate $w:=w_{\lambda_*}$. Since reflection across $\{x_1=0\}$ flips $e_1$ and fixes $e_N$, we have
    \[
        \nabla w(0)=0,
        \qquad
        \partial_{x_1}^2w(0)=0,
        \qquad
        \partial_{x_N}w(0)=0,
        \qquad
        \partial_{x_N}\partial_{x_1} w(0)=2\partial_{x_N}\partial_{x_1}u(0)\ge0.
    \]
    Moreover, as $w(x)=u(x)-u(x^{\lambda_*})$ and here $\lambda_*=0$, we have $x^{\lambda_*}=(-x_1,x_2,\dots,x_N)$, so the reflection leaves the $x_N$-coordinate unchanged and therefore
    \[
        \partial_{x_N}^2w(x)=\partial_{x_N}^2u(x)-\partial_{x_N}^2u(x^{\lambda_*}).
    \]
    Evaluating at $Q=0\in T_{\lambda_*}$, where $0^{\lambda_*}=0$, yields
    \[
        \partial_{x_N}^2w(0)=\partial_{x_N}^2u(0)-\partial_{x_N}^2u(0)=0.
    \]

    Let
    \[
        s:=\frac{-e_1+e_N}{\sqrt2},
    \]
    which points into the corner $E_{\lambda_*}$ at $Q$. Then
    \[
        \partial_s w(0)=0
    \]
    and
    \[
        \partial_{ss}w(0)
        =
        \frac12\bigl(\partial_{x_1}^2w(0)+\partial_{x_N}^2w(0)-2\partial_{x_N}\partial_{x_1}w(0)\bigr)
        =
        -\partial_{x_N}\partial_{x_1}w(0)
        \le0.
    \]
    Since $w\ge0$ and $w>0$ in $D_Q$, $w$ is harmonic in $D_Q$, and $w(0)=0$, Serrin's corner lemma (\cite[Appendix]{MR1463801}) applied in the local corner domain $D_Q\cap B_r(0)$ for $r>0$ small and to the inward bisector direction $s$ yields a contradiction.

    Therefore the alternative that $w_{\lambda_*}>0$ in every connected
    component of $E_{\lambda_*}$ is impossible. Hence
    $w_{\lambda_*}\equiv 0$ on at least one connected component of
    $E_{\lambda_*}$. By the argument of Reichel's Step~(I), with left and
    right interchanged, it follows that $E$ is symmetric with respect to
    $T_{\lambda_*}$, and therefore so is $\Omega$.

    Since the direction $e_1$ was arbitrary, $\Omega$ is a ball. Finally,
    the volume constraint $|\Omega|=\omega_N$ implies that the radius is one.
    Hence
    \[
        \Omega=B_1(x_0)
        \qquad\text{for some }x_0\in\R^N.
    \]
\end{proof}

\section{The planar case}\label{sec:planar}

In dimension $N=2$ the decay condition at infinity is replaced by a logarithmic far-field condition. We consider
\begin{equation}\label{eq:original-problem-2d}
    \left\{
    \begin{aligned}
        -\Delta u          & = 0              &  & \text{in } \R^2\setminus\overline\Omega, \\
        u                  & = u_0            &  & \text{on } \Sigma:=\partial\Omega,       \\
        \partial_\nu u     & = \gamma \Hn + C &  & \text{on } \Sigma,                       \\
        u(x)+\alpha\log|x| & = O(1)           &  & \text{as } |x|\to\infty,
    \end{aligned}
    \right.
\end{equation}
where $\alpha \neq 0$ and $|\Omega|=\pi R_0^2$. Here $\Hn$ is the curvature of $\Sigma$ with the convention $\Hn_{\partial B_1} = -1$. For $\Omega=B_{R_0}$ the unique radial solution is
\[
    u_*(x)=u_0+\alpha\log\frac{R_0}{|x|},
\]
so the spherical compatibility value is
\[
    C=\frac{\gamma-\alpha}{R_0}.
\]
After the rescaling
\[
    x=R_0 y,
    \qquad
    u(x)=u_0+\alpha\big(v(y)-1\big),
    \qquad
    \widetilde\Omega=R_0^{-1}\Omega,
    \qquad
    \Gamma:=\frac\gamma\alpha,
\]
and dropping tildes, we arrive at the normalised planar problem
\begin{equation}\label{eq:normalized-problem-2d}
    \left\{
    \begin{aligned}
        -\Delta u      & =0                    &  & \text{in }\R^2\setminus\overline\Omega, \\
        u              & =1                    &  & \text{on }\Sigma,                       \\
        \partial_\nu u & = \Gamma \Hn+\Gamma-1 &  & \text{on }\Sigma,                       \\
        u(x)+\log|x|   & = O(1)                &  & \text{as }|x|\to\infty,
    \end{aligned}
    \right.
\end{equation}
with $|\Omega|=\pi$.

Following the argument in \cite[Section~2]{niebel2025globalrigiditytwodimensionalbubbles}, together with a short reduction excluding holes, we obtain global rigidity of the unit circle for bounded planar domains.

\begin{theorem} \label{thm:planar-jordan}
    Let $\Gamma\in\R$. Let $\Omega\subset\R^2$ be a bounded domain such that $|\Omega|=\pi$ and $\partial\Omega\in \C^{1,1}$. Assume that \eqref{eq:normalized-problem-2d} admits a solution $u \in \C^2(\R^2\setminus\overline\Omega)\cap \C^1(\R^2\setminus\Omega)$.
    Then $\Omega$ is a unit disk. Consequently,
    \[
        u(x)=1-\log|x-x_0|
        \qquad\text{for some }x_0\in\R^2,
    \]
    and $\partial\Omega=\partial B_1(x_0)$.
\end{theorem}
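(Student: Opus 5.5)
The plan is to use conformal mapping. It turns the free boundary condition into a statement about the modulus of the derivative of a Riemann map, and the volume constraint together with the isoperimetric inequality then finishes the argument.

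\emph{Topological reduction.} Let $E:=\R^2\setminus\overline\Omega$ and let $E_\infty$ be its unbounded component. Put $K:=\R^2\setminus E_\infty$, which is $\overline\Omega$ together with all bounded components (holes) of $E$. Since $\Omega$ is connected, $K$ is a compact, connected, full set with $|K|\ge|\Omega|=\pi$, and $\partial E_\infty\subset\Sigma$ is a $\C^{1,1}$ Jordan curve. By the Riemann mapping theorem there is a conformal bijection
\[
\Phi:\{|z|>1\}\to E_\infty,\qquad \Phi(z)=cz+O(1)\quad\text{as } z\to\infty,\qquad c>0 .
\]
By the Kellogg--Warschawski theorem, $\Phi$ extends to a $\C^{1,\alpha}$ map up to $|z|=1$ with $\Phi'\neq0$ there. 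For $\Gamma\neq0$, Remark~\ref{rem:regularity} makes $\partial E_\infty$ smooth, so $\Phi$ is smooth up to the boundary.

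\emph{Reduction to the disc.} Set $v:=u\circ\Phi$. It is harmonic in $|z|>1$, equals $1$ on $|z|=1$, and satisfies $v+\log|z|=O(1)$ at infinity. Hence $v-1+\log|z|$ is harmonic, bounded, and vanishes on the unit circle, so $v=1-\log|z|$. By conformality,
\[
\partial_\nu u\circ\Phi=-\frac{1}{|\Phi'|}\qquad\text{on }|z|=1.
\]
For $\Gamma\neq0$ I use the standard formula for the curvature of the image curve. With the paper's sign convention it reads
\[
\Hn\circ\Phi=-\frac{1+\operatorname{Re}\bigl(z\Phi''/\Phi'\bigr)}{|\Phi'|}.
\]
Put $h:=\log|\Phi'|$, which is harmonic in $|z|>1$ and bounded at infinity (it tends to $\log c$), and note $\operatorname{Re}(z\Phi''/\Phi')=r\partial_rh$. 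Multiplying the boundary condition by $|\Phi'|$ gives
\[
\Gamma\bigl(1+\partial_r h\bigr)-1=(\Gamma-1)e^{h}\qquad\text{on }|z|=1 .
\]
This identity also holds trivially when $\Gamma=0$, where it just says $|\Phi'|=1$.

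\emph{Integration.} Because $h$ is harmonic and bounded in the exterior of the disc, $r\mapsto\int_0^{2\pi}h(re^{i\theta})\dd\theta$ is constant. Therefore $\int_{|z|=1}\partial_r h\dd\theta=0$, and integrating the identity above gives
\[
(\Gamma-1)\Bigl(2\pi-\int_0^{2\pi}|\Phi'|\dd\theta\Bigr)=0 .
\]
\begin{itemize}
\item If $\Gamma\neq1$, the length of $\partial E_\infty$ is $\int_0^{2\pi}|\Phi'|\dd\theta=2\pi$. The isoperimetric inequality then gives $4\pi|K|\le(2\pi)^2$, so $|K|\le\pi\le|\Omega|\le|K|$. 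Equality holds throughout: $K$ is a disc of radius $1$, and the holes $K\setminus\overline\Omega$ are open sets of measure zero, hence empty. Thus $\Omega=B_1(x_0)$.
\item If $\Gamma=1$, the identity becomes $\partial_rh=0$ on the circle. Since $r\partial_rh=\operatorname{Re}(z\Phi''/\Phi')$ is harmonic and vanishes at infinity, it vanishes identically, so $z\Phi''/\Phi'\equiv0$. Hence $\Phi$ is affine, and $E_\infty$ is the exterior of a disc $B_\rho(x_0)$. The same measure argument ($|\Omega|=\pi$, holes open) forces $\rho=1$ and no holes.
\end{itemize}
Finally, $u=1-\log|x-x_0|$ by uniqueness of the exterior problem.

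\emph{Main obstacle.} I expect the delicate step to be the boundary regularity needed to justify the curvature formula and the vanishing of $\int_{|z|=1}\partial_rh\dd\theta$. This requires $h\in\C^1$ up to $|z|=1$, obtained from Remark~\ref{rem:regularity} combined with Kellogg--Warschawski. I also need to check that $\partial E_\infty$ is a single Jordan curve, so that the Riemann map exists with the stated boundary behaviour. In the case $\Gamma=0$ only $\Phi'$ continuous and nonvanishing is needed, so the minimal $\C^{1,\alpha}$ regularity suffices there.
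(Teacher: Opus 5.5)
There is a genuine gap, and it is in the case $\Gamma=1$. Once you show that $\Phi$ is affine, you only know $K=\overline{B_\rho(x_0)}$. The measure argument then gives only $\pi\rho^2=|K|\ge|\Omega|=\pi$, that is, $\rho\ge1$.

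For $\Gamma\neq1$ the squeeze worked because the integrated identity gave $|\partial E_\infty|=2\pi$, and hence the upper bound $|K|\le\pi$ via the isoperimetric inequality. At $\Gamma=1$ that identity carries no information. In fact the outer condition is dilation invariant: $\Phi(z)=\rho z+x_0$ gives $\partial_\nu u=\Hn=-1/\rho$ for every $\rho>0$.

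Everything your argument uses (the equation in $E_\infty$, the conditions on $\partial E_\infty$, the far field, and $|\Omega|=\pi$) is satisfied by $\Omega=B_\rho(x_0)\setminus\overline{B_s(y)}$. Here $\overline{B_s(y)}\subset B_\rho(x_0)$, $\rho^2-s^2=1$ and $\rho>1$, with $u=1-\log(|x-x_0|/\rho)$ outside $B_\rho(x_0)$ and $u\equiv1$ on $\overline{B_s(y)}$. So the fact that the holes are open cannot force $\rho=1$. What rules this configuration out is the boundary condition on the inner boundary components, which your proof never uses.

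The fix is the paper's hole-exclusion step. Let $D$ be a bounded component of $\R^2\setminus\overline\Omega$. Then $u\equiv1$ in $D$ by the maximum principle, and $\partial_\nu u=0$ on $\partial D$ because $u\in\C^1(\R^2\setminus\Omega)$. At $\Gamma=1$ the boundary condition then reads $\Hn\equiv0$ on the closed curve $\partial D$, contradicting $\int_{\partial D}\Hn\dd S=2\pi$. In general, integrating gives $0=2\pi\Gamma+(\Gamma-1)|\partial D|$, so holes can occur only for $0<\Gamma<1$. Hence $K=\overline\Omega$, and $\pi\rho^2=\pi$ gives $\rho=1$.

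With this added, your proof is a valid alternative to the paper's. For $\Gamma\neq1$, your integrated identity is the paper's identity on the outer component (flux $-2\pi$ and turning angle $-2\pi$). You dispose of holes by the isoperimetric squeeze rather than by a separate computation on the inner components. For $\Gamma=1$, your observation that $\partial_r\log|\Phi'|=0$ on the circle forces $\Phi$ to be affine. This replaces the paper's route through strict convexity, the Pohozaev identity and the weighted Cauchy--Schwarz inequality.
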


\begin{proof}
    Set
    \[
        \Sigma:=\partial\Omega.
    \]
    Since $\partial\Omega$ is a compact $\C^{1,1}$ one-dimensional manifold, it is a finite disjoint union of $\C^{1,1}$ Jordan curves. Because $\Omega$ is connected and bounded, we may write
    \[
        \partial\Omega=\Sigma_0\cup\Sigma_1\cup\cdots\cup\Sigma_{m-1},
    \]
    where $\Sigma_0$ is the outer boundary component and $\Sigma_1,\dots,\Sigma_{m-1}$ are the inner boundary components. We first claim that $m=1$.

    The boundary condition is
    \begin{equation}\label{eq:qH-2d}
        \partial_\nu u=\Gamma\Hn+\Gamma-1.
    \end{equation}

    Since $u$ is harmonic we have
    \[
        u(x)=-\log|x|+O(1),
        \qquad
        \partial_r u(x)=-\frac1{|x|}+O(|x|^{-2})
        \qquad\text{as }|x|\to\infty,
    \]
    and integrating $\Delta u=0$ over $D_R:=B_R\setminus\overline\Omega$ gives
    \[
        \int_{\partial B_R}\partial_r u\dd S-\int_{\partial\Omega}\partial_\nu u\dd S=0.
    \]
    Letting $R\to\infty$, we obtain
    \begin{equation}\label{eq:flux-2d}
        \int_{\partial\Omega}\partial_\nu u\dd S=-2\pi.
    \end{equation}

    Suppose by contradiction that $m\ge2$. For $j=1,\dots,m-1$, let $D_j$ be the bounded connected component of $\R^2\setminus\overline\Omega$ enclosed by $\Sigma_j$. Since $u$ is harmonic in $D_j$ and $u=1$ on $\Sigma_j$, the maximum principle gives
    \[
        u= 1
        \qquad\text{in }D_j.
    \]
    Hence
    \[
        \partial_\nu u=0
        \qquad\text{on }\Sigma_j.
    \]
    Integrating \eqref{eq:qH-2d} over $\Sigma_j$ and using
    \[
        \int_{\Sigma_j}\Hn\dd S=2\pi
    \]
    for an inner boundary component, we obtain
    \[
        0=2\pi\Gamma+(\Gamma-1)|\Sigma_j|.
    \]
    Since $|\Sigma_j|>0$, this is possible only if $0<\Gamma<1$.

    On the other hand, \eqref{eq:flux-2d} and the vanishing of $\partial_\nu u$ on the inner components imply
    \[
        \int_{\Sigma_0}\partial_\nu u\dd S=-2\pi.
    \]
    Moreover, for the outer boundary component,
    \[
        \int_{\Sigma_0}\Hn\dd S=-2\pi.
    \]
    Integrating \eqref{eq:qH-2d} over $\Sigma_0$ therefore yields
    \[
        -2\pi=-2\pi\Gamma+(\Gamma-1)|\Sigma_0|.
    \]
    Since $0<\Gamma<1$, it follows that
    \[
        |\Sigma_0|=2\pi.
    \]

    Let $G$ be the bounded Jordan domain enclosed by $\Sigma_0$. By the isoperimetric inequality,
    \[
        |G|\le\frac{|\Sigma_0|^2}{4\pi}=\pi.
    \]
    On the other hand,
    \[
        |G|
        =
        |\Omega|+\sum_{j=1}^{m-1}|D_j|
        =
        \pi+\sum_{j=1}^{m-1}|D_j|
        >
        \pi,
    \]
    a contradiction. Thus $m=1$. Therefore $\Sigma$ is a $\C^{1,1}$ Jordan curve, and $\R^2\setminus\overline\Omega$ is connected.

    By translation invariance we may assume that $0\in\Omega$.

    Since $u=1$ on $\Sigma$ and $u(x)\to-\infty$ as $|x|\to\infty$, the maximum principle gives $u<1$ in $\R^2\setminus\overline\Omega$, and the Hopf lemma yields
    \begin{equation}\label{eq:q-negative-2d}
        \partial_\nu u<0\qquad\text{on }\Sigma.
    \end{equation}

    If $\Gamma=0$, then $\partial_\nu u=-1$ on $\Sigma$, so by
    \eqref{eq:flux-2d} we get $|\Sigma|=2\pi$, and the isoperimetric inequality
    implies that $\Omega$ is a unit disk.

    Assume now that $\Gamma\neq0$. By Remark~\ref{rem:regularity} (which applies verbatim when $N = 2$),
    $\Sigma$ and $u$ are smooth.

    Integrating \eqref{eq:qH-2d} over $\Sigma$ and using the turning-angle formula
    \[
        \int_\Sigma \Hn\dd S=-2\pi
    \]
    yields
    \[
        -2\pi
        =
        \Gamma\int_\Sigma \Hn\dd S+(\Gamma-1)|\Sigma|
        =
        -2\pi\Gamma+(\Gamma-1)|\Sigma|.
    \]
    If $\Gamma\neq1$, $(\Gamma-1)(|\Sigma|-2\pi)=0$ implies $|\Sigma|=2\pi$. Since $|\Omega|=\pi$, the isoperimetric inequality implies that $\Omega$ is a unit disk.

    It remains to consider the borderline case $\Gamma=1$. Then \eqref{eq:qH-2d} gives $\partial_\nu u=\Hn$, so \eqref{eq:q-negative-2d} implies
    \[
        \Hn<0\qquad\text{on }\Sigma.
    \]
    Thus $\Sigma$ is a smooth strictly convex Jordan curve by \cite[Theorem 2.2.15]{MR2664879}. In particular,
    \[
        x\cdot\nu>0\qquad\text{on }\Sigma.
    \]

    We now use the planar Pohozaev identity
    \begin{equation}\label{eq:pohozaev-2d}
        \int_\Sigma (x\cdot\nu)(\partial_\nu u)^2\dd S=2\pi.
    \end{equation}
    Indeed, for
    \[
        X:=\frac12|\nabla u|^2x-(x\cdot\nabla u)\nabla u
    \]
    one has $\operatorname{div}X=0$ in $D_R$. On $\Sigma$, where the outer normal of $D_R$ is $-\nu$, one computes
    \[
        X\cdot(-\nu)=\frac12\,(x\cdot\nu)(\partial_\nu u)^2.
    \]
    On $\partial B_R$,
    \[
        X\cdot \frac{x}{|x|}=-\frac1{2R}+O(R^{-2}),
    \]
    hence
    \[
        \int_{\partial B_R}X\cdot\frac{x}{|x|}\dd S\to-\pi.
    \]
    Letting $R\to\infty$ gives \eqref{eq:pohozaev-2d}.

    We also have the planar Minkowski formulas
    \begin{equation}\label{eq:minkowski0-2d}
        \int_\Sigma x\cdot\nu\dd S=2|\Omega|=2\pi,
    \end{equation}
    \begin{equation}\label{eq:minkowski1-2d}
        -\int_\Sigma (x\cdot\nu)\Hn\dd S=|\Sigma|.
    \end{equation}
    Since now $\partial_\nu u=\Hn$, \eqref{eq:pohozaev-2d} becomes
    \[
        2\pi=\int_\Sigma (x\cdot\nu)\Hn^2\dd S.
    \]
    Therefore, by Cauchy--Schwarz with weight $x\cdot\nu$,
    \begin{align*}
        |\Sigma|^2
         & =
        \left(-\int_\Sigma (x\cdot\nu)\Hn\dd S\right)^2
        \le
        \left(\int_\Sigma x\cdot\nu\dd S\right)
        \left(\int_\Sigma (x\cdot\nu)\Hn^2\dd S\right) \\
         & =
        (2\pi)(2\pi)=4\pi^2.
    \end{align*}
    Hence $|\Sigma|\le2\pi$. The isoperimetric inequality gives $|\Sigma|\ge2\pi$, so $|\Sigma|=2\pi$, and again $\Omega$ is a unit disk.

    Finally, once $\Omega$ is a unit disk, the unique harmonic solution to \eqref{eq:normalized-problem-2d} is, up to translation, $u(x)=1-\log|x-x_0|$.
\end{proof}

\begin{remark}\label{rem:planar-linearization}
    Formally, let
    \[
        \Sigma_\varepsilon
        =
        \bigl\{(1+\varepsilon f(\theta))e^{i\theta}:\theta\in\mathbb S^1\bigr\},
        \qquad
        u_\varepsilon(r,\theta)
        =
        1-\log r+\varepsilon v(r,\theta)+o(\varepsilon).
    \]
    Then $v$ solves
    \[
        -\Delta v=0
        \quad\text{in }\R^2\setminus\overline{B_1},
        \qquad
        v=f
        \quad\text{on }\partial B_1,
        \qquad
        v=O(1)
        \quad\text{as }|x|\to\infty,
    \]
    and the linearised overdetermined condition reads
    \[
        \partial_r v(1,\theta)+f(\theta)
        =
        \Gamma\bigl(f_{\theta\theta}(\theta)+f(\theta)\bigr).
    \]
    Equivalently, if $\mathcal E(f)$ denotes the bounded exterior harmonic extension of $f$, then
    \[
        \mathcal L_\Gamma f
        :=
        \partial_r\mathcal E(f)\big|_{r=1}+f-\Gamma(f_{\theta\theta}+f)=0.
    \]
    If one sets $\Lambda f:=-\partial_r\mathcal E(f)|_{r=1}$, then
    \[
        \mathcal L_\Gamma = \mathrm{Id}-\Lambda-\Gamma(\partial_{\theta\theta}+I).
    \]

    Writing
    \[
        f(\theta)
        =
        a_0+\sum_{\ell\ge1}\bigl(a_\ell\cos(\ell\theta)+b_\ell\sin(\ell\theta)\bigr),
    \]
    one finds
    \[
        \mathcal L_\Gamma\bigl(\cos(\ell\theta)\bigr)
        =
        \mu_\ell(\Gamma)\cos(\ell\theta),
        \qquad
        \mathcal L_\Gamma\bigl(\sin(\ell\theta)\bigr)
        =
        \mu_\ell(\Gamma)\sin(\ell\theta),
    \]
    with
    \[
        \mu_\ell(\Gamma)=(1-\ell)\bigl(1-\Gamma(\ell+1)\bigr)
        =(\ell-1)\bigl(\Gamma(\ell+1)-1\bigr).
    \]
    Here $\ell=0$ corresponds to scaling, while $\ell=1$ corresponds to translations.

    The tangent space to the area constraint $|\Omega|=\pi$ at the unit disk is
    \[
        \left\{f:\int_0^{2\pi} f(\theta)\,d\theta=0\right\},
    \]
    so the constant mode is removed. Hence, on zero-mean perturbations,
    \[
        \ker \mathcal L_\Gamma
        =
        \operatorname{span}\{\cos\theta,\sin\theta\}
    \]
    for $\Gamma\notin\{\frac1{\ell+1}:\ell\ge2\}$, whereas at the resonant values
    \[
        \Gamma_\ell=\frac1{\ell+1},\qquad \ell\ge2,
    \]
    the modes $\cos(\ell\theta)$ and $\sin(\ell\theta)$ also belong to the linear kernel.

    These additional kernel directions do not generate nearby
    area-preserving branches. Indeed, a second-order ansatz
    \[
        R(\theta)=1+\varepsilon\cos(\ell\theta)
        +\varepsilon^2\bigl(a_0+a_2\cos(2\ell\theta)\bigr)+O(\varepsilon^3)
    \]
    yields $a_0=-\frac{\ell^2}{4}$
    from the constant Fourier mode of the overdetermined condition, whereas
    area preservation imposes $a_0=-\frac14$.
    These are compatible only for $\ell=1$. Thus no nontrivial $\ell$-fold
    branch exists for $\ell\ge2$, in agreement with Theorem~\ref{thm:planar-jordan}.
\end{remark}

Let us further deduce a topological identity for multiply connected boundaries.

\begin{lemma}\label{lem:planar-topological}
    Let $\Omega\subset\R^2$ be a bounded set of class $\C^{1,1}$, not necessarily connected, and assume that \eqref{eq:normalized-problem-2d} admits a solution $u \in \C^2(\R^2\setminus\overline\Omega)\cap \C^1(\R^2\setminus\Omega)$. Then
    \begin{equation}\label{eq:planar-topological}
        -2\pi=-2\pi\Gamma\chi(\Omega)+(\Gamma-1)|\partial\Omega|.
    \end{equation}
    Equivalently,
    \[
        (\Gamma-1)|\partial\Omega|=2\pi\bigl(\Gamma\chi(\Omega)-1\bigr).
    \]
\end{lemma}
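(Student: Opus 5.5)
The plan is to integrate the boundary condition \eqref{eq:qH-2d} over all of $\partial\Omega$. The two sides are then evaluated separately: the flux identity \eqref{eq:flux-2d} handles the left-hand side, and a Gauss--Bonnet/turning-number count handles the curvature term. No connectedness of $\Omega$ is needed for either step.

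\emph{Flux.} Set $E:=\R^2\setminus\overline\Omega$. First integrate $\Delta u=0$ over $B_R\cap E$, using the far-field behaviour $\partial_r u=-1/|x|+O(|x|^{-2})$ (which follows from $u+\log|x|=O(1)$ and harmonicity). Letting $R\to\infty$ gives
\[
\int_{\partial\Omega}\partial_\nu u\dd S=-2\pi,
\]
exactly as in \eqref{eq:flux-2d}. Bounded components of $E$ need no separate treatment here: they contribute zero flux individually, since the divergence theorem applies on each of them. The unbounded component carries the whole flux.

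\emph{Curvature integral.} Decompose $\partial\Omega$ into finitely many $\C^{1,1}$ Jordan curves. For each curve $\Sigma_j$, the turning-angle formula with the convention $\Hn_{\partial B_1}=-1$ and $\nu$ the outer normal of $\Omega$ gives
\[
\int_{\Sigma_j}\Hn\dd S=\mp2\pi.
\]
The sign is $-2\pi$ if $\Omega$ lies locally inside $\Sigma_j$ (an outer boundary of a component of $\Omega$), and $+2\pi$ if $\Omega$ lies locally outside (a hole), as already used in the proof of Theorem~\ref{thm:planar-jordan}. For each connected component $\Omega_i$ with $h_i$ holes this gives $\int_{\partial\Omega_i}\Hn\dd S=-2\pi(1-h_i)=-2\pi\chi(\Omega_i)$, since a planar domain with $h_i$ holes has Euler characteristic $1-h_i$. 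Summing over components yields $\int_{\partial\Omega}\Hn\dd S=-2\pi\chi(\Omega)$. For $\C^{1,1}$ curves $\Hn$ is defined a.e., and the turning-angle formula still holds because the unit tangent is Lipschitz.

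\emph{Conclusion and main difficulty.} Integrating \eqref{eq:qH-2d} over $\partial\Omega$ gives
\[
-2\pi=\Gamma\bigl(-2\pi\chi(\Omega)\bigr)+(\Gamma-1)|\partial\Omega|,
\]
which is \eqref{eq:planar-topological}; the equivalent form follows by rearranging. The only delicate point is the bookkeeping in the curvature step: correctly assigning the orientation sign to each boundary curve relative to the outer normal of $\Omega$ when $\Omega$ may have nested components (islands inside holes). I would handle this by noting that the sign depends only on which side of the curve $\Omega$ lies locally. That makes the count component by component and yields $\chi$ additively.
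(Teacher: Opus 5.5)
Your proposal is correct and follows the paper's proof. You integrate the boundary condition over $\partial\Omega$, evaluate the left side with the flux identity $\int_{\partial\Omega}\partial_\nu u\dd S=-2\pi$ and the curvature term with $\int_{\partial\Omega}\Hn\dd S=-2\pi\chi(\Omega)$; your component-by-component turning-number count just spells out the Gauss--Bonnet step that the paper invokes directly.
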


\begin{proof}
    The boundary condition is
    \[
        \partial_\nu u=\Gamma\Hn+\Gamma-1
        \qquad\text{on }\partial\Omega.
    \]
    The flux identity \eqref{eq:flux-2d} remains valid for every bounded $\C^{1,1}$ domain, so
    \[
        \int_{\partial\Omega} \partial_\nu u\dd S=-2\pi.
    \]
    On the other hand, Gauss--Bonnet gives
    \[
        \int_{\partial\Omega} \Hn\dd S=-2\pi\chi(\Omega).
    \]
    Integrating the boundary condition over $\partial\Omega$ and using these two identities yields
    \[
        -2\pi
        =
        \int_{\partial\Omega} \partial_\nu u\dd S
        =
        \Gamma\int_{\partial\Omega} \Hn\dd S+(\Gamma-1)|\partial\Omega|
        =
        -2\pi\Gamma\chi(\Omega)+(\Gamma-1)|\partial\Omega|,
    \]
    which is \eqref{eq:planar-topological}.
\end{proof}

\bibliographystyle{plain}

\end{document}